
\documentclass[]{interact}

\usepackage{graphicx}%
\usepackage{subfigure}
\usepackage{multirow}%
\usepackage{amsmath,amssymb,amsfonts}%
\usepackage{amsthm}%
\usepackage{mathrsfs}%
\usepackage[title]{appendix}%
\usepackage{xcolor}%
\usepackage{textcomp}%
\usepackage{manyfoot}%
\usepackage{booktabs}%
\usepackage{algorithm}%
\usepackage{algorithmicx}%
\usepackage{algpseudocode}%
\usepackage{listings}%
\usepackage{CJK}
\usepackage{indentfirst}
\usepackage{cases}
\usepackage{longtable}
\usepackage{array}
\usepackage{caption}
\captionsetup{font={normalsize}}

\usepackage{epstopdf}
\usepackage[caption=false]{subfig}

\usepackage[numbers,sort&compress]{natbib}
\bibpunct[, ]{[}{]}{,}{n}{,}{,}
\makeatletter
\def\NAT@def@citea{\def\@citea{\NAT@separator}}
\makeatother

\theoremstyle{plain}
\newtheorem{theorem}{Theorem}[section]
\newtheorem{lemma}[theorem]{Lemma}

\newtheorem{proposition}[theorem]{Proposition}

\theoremstyle{definition}

\newtheorem{assumption}[theorem]{Assumption}

\theoremstyle{remark}

\begin{document}
	
	
	\title{A Fast Smoothing Newton Method for Bilevel Hyperparameter Optimization for SVC with Logistic Loss}
	
	\author{
		\name{Yixin Wang\textsuperscript{a}\thanks{Email: eathelynwang@outlook.com} and 
			Qingna Li\textsuperscript{a,b}\thanks{Email: qnl@bit.edu.cn. Corresponding author. This author's research is supported by NSFC 12071032.}}
		\affil{\textsuperscript{a}School of Mathematics and Statistics, Beijing Institute of Technology, Beijing, China; \textsuperscript{b}Beijing Key Laboratory on MCAACI/Key Laboratory of Mathematical Theory and Computation in Information Security, Beijing Institute of Technology, Beijing, China}
	}
	
	\maketitle
	
	\begin{abstract}
		Support vector classification (SVC) with logistic loss has excellent theoretical properties in classification problems where the label values are not continuous. 
		In this paper, we reformulate the hyperparameter selection for SVC with logistic loss as a bilevel optimization problem in which the upper-level problem and the lower-level problem are both based on logistic loss.
		The resulting bilevel optimization model is converted to a single-level nonlinear programming (NLP) based on the KKT conditions of the lower-level problem. Such NLP contains a set of nonlinear equality constraints and a simple lower bound constraint. The second-order sufficient condition is characterized, which guarantees that the strict local optimizers are obtained.
		To solve such NLP, we apply the smoothing Newton method proposed in \cite{Liang} to solve the KKT conditions, which contain one pair of complementarity constraints. We show that the smoothing Newton method has a superlinear convergence rate. 
		Extensive numerical results verify the efficiency of the proposed approach and strict local minimizers can be achieved both numerically and theoretically. In particular, compared with other methods, our algorithm can achieve competitive results while consuming less time than other methods.
	\end{abstract}
	
	\begin{keywords}
		Support vector classification, Hyperparameter selection, Bilevel optimization, Logistic loss, Smoothing Newton Method, Superlinear Convergence
	\end{keywords}

	\section{Introduction}\label{sec1}
	Support vector classification (SVC) is an optimal margin based classification technique, which is a classical and well-performed learning method for classification problems in machine learning \cite{Chauhan, Chapelle,Cortes,Duan,Keerthi,Vapnik}. 
	In those models, hyperparameter selection is a critical issue and has been addressed by many researchers both theoretically and practically \cite{Dong,Kunapuli,Kunapuli_a,Kunapuli_b}. Cross validation is usually conducted in order to choose hyperparameters.
	A classical approach for cross-validation is the grid search method \cite{Momma}, where one needs to define a hyperparameter grid, and search for the optimal hyperparameters to minimize the cross-validation error (CV error). 
	Bennett et al. \cite{Bennett} emphasize that one of the drawbacks of the grid search approach is that the continuity of the hyperparameter is ignored by the discretization.
	
	In terms of selecting hyperparameters through bilevel optimization, which is a very active field of applied mathematics, different models and approaches have been considered. The main reason is that bilevel optimization problems can serve as a powerful tool for modeling hierarchical decision making processes \cite{LiQ}. This cross-validation optimization problem can be formulated as a bilevel program which is significantly more versatile than the commonly used grid search procedure \cite{Kunisch}.
	Colson et al. \cite{Colson} proceed towards a general formulation of  bilevel optimization and then focus on solution approaches with mathematical program with equilibrium constraints (MPEC).
	Moore et al. \cite{Moore} propose a nonsmooth bilevel model and design a proximity control approximation algorithm to select hyperparameters with T-fold cross-validation. 
	Okuno et al. \cite{Okuno} propose a bilevel optimization model to select the best hyperparameter for a nonsmooth, possibly non-convex, $l_p$-regularized problem. They present a smoothing-type algorithm with convergence analysis to solve this bilevel optimization model. 
	Li et al. \cite{LiQ} establish a bilevel programming model for support vector classification, transforming it into an MPEC, and using the global relaxing method (GRM) to solve it, which improved the accuracy of classification prediction. In particular, they show that due to the structure of the problem, the Mangasarian–Fromovitz constraint qualification (MFCQ) holds for such MPEC, which guarantees the convergence of GRM to a C-stationary point. In \cite{LiZ}, the authors further investigate the same framework with lower-level loss function replaced by $l_2$-loss. Interestingly, MPEC-MFCQ also holds for the resulting MPEC. 
	In \cite{LiQ_b}, the authors proposed an efficient LP-Newton global relaxation method to tackle the MPEC derived from the bilevel model in \cite{Kunapuli_a}, which involves multiple hyperparameters to deal with feature selection.
	In \cite{Coniglio}, the authors further characterized the linearly independent constraint qualification for MPEC (LICQ-MPEC) derived from the bilevel model for hyperparameter selection in kernel SVC. 
	
	Besides the $l_1$-loss SVC and the $l_2$-loss SVC, the logistic-loss SVC is also widely used. The logistic-loss SVC was originally proposed by Nelder and Wedderburn in 1972 \cite{Nelder}. They show that the logistic-loss function constructed based on entropy and conditional probability has better theoretical properties and it uses the maximum likelihood estimation of the probability log function to fit the parameters with the goal of maximizing the probability sense.  
	
	Logistic loss has shown strong superiority in classification prediction \cite{Dayton,Franceschini,Glonek,LaValley,Salehi,Tripepi}. Compared with $l_1$-loss function and $l_2$-loss function, the advantage of logistic-loss function lies in two aspects.
	Firstly, the logistic-loss function is smooth, whereas the $l_1$-loss function and $l_2$-loss function are not. By introducing extra variables and inequalities, SVC with $l_1$-loss and $l_2$-loss functions can be transferred into smooth optimization problems, which lead to complementarity constraints in the resulting single-level optimization problem. However, it increases the scale of variables and constraints, and the resulting MPEC is much more difficult to solve. In contract, as we will see later, the logistic-loss SVC in lower-level problem will lead to a nice equality constraint in the resulting single level NLP.
	Secondly, the $l_1$-loss and $l_2$-loss functions are susceptible to extreme outliers, and small errors near the boundary $\left({x_i}^\top w=0\right)$ are easy to be ignored, which reduces the accuracy of classification. However, the logistic loss function is sensitive near the boundary $\left({x_i}^\top w=0\right)$ and it reduces the effect of outliers \cite{Kleinbaum}. Therefore, the logistic loss can make the model pay more attention to the classification boundary: errors near the boundary line are strengthened, which are less likely to be ignored in the optimization process, and the influence of extreme outliers is weakened, thereby increasing the stability of the model \cite{Nguyen}.
	
	Due to the above reasons, logistic-loss SVC is extensively studied and is applied to a wide range of biological and socio-technical systems \cite{Abdalrada,Stoltzfus,Meyer,Walkling}. 
	Pregibon \cite{Pregibon} develops diagnostic measures to aid the analyst in detectingsuch observations and in quantifying their effect on various aspects of the maximum likelihood fit of a logistic model.
	Peduzzi et al. \cite{Peduzzi} perform a Monte Carlo study to evaluate the effect of the number of events per variable (EPV) based on logistic regression analysis.	
	Chaudhuri et al. \cite{Chaudhuri} use regularized logistic loss to establish a privacy protection mechanism, which balances data privacy protection and classification learning performance to a certain extent.
	Le et al. \cite{Le} proposed a fast and accurate approach that utilises GPU-based extreme gradient boosting machine using squared logistic loss (SqLL) for bankruptcy forecasting. 
	
	To summarize, a natural question is that what will happen to the bilevel model for SVC with logistic loss. This motivate the work in our paper. In this paper, we will study the bilevel optimization model based on logistic loss for selecting the hyperparameter in logistic-loss SVC. 
	
	The contributions of the paper are as follows. 
	Firstly, we propose a bilevel optimization model based on logistic functions for hyperparameter selection in a binary SVC. 
	Secondly, we convert the bilevel optimization model to an NLP, and investigate the first and second order optimality conditions. 
	Thirdly, we apply the squared smoothing Newton method studied in \cite{Liang} to solve the KKT system of NLP. A superlinear convergence rate is achieved for the proposed method. 
	Finally, we conduct extensive numerical experiments to verify the efficiency of the proposed method.
	
	The paper is organized as follows. 
	In Sect. 2, based on T-fold cross-validation, we introduce a bilevel optimization model based on logistic functions to select an optimal hyperparameter for SVC.
	In Sect. 3, we transfer the bilevel optimization model to an NLP and analyze the interesting properties including critical cone and second-order sufficient conditions. 
	In Sect. 4, we smooth the KKT conditions of NLP by Huber function, and reformulate the smoothing system as a nonlinear equation system. We apply a squared smoothing Newton method, and prove that the Jacobian matrix of the nonlinear equation system is nonsingular, which is a key property to guarantee the superlinearly convergence of the algorithm. 
	In Sect. 5, we conduct extensive numerical experiments, and compare the results with other methods. 
	Finally, we conclude the paper in Sect. 6.
	
	$\mathbf{Notations.}$
	We use $[k]$ to denote $\{1,2,...,k\}$ for the positive integer $k$.
	We use ':=' to represent 'define'.
	For $v\in\mathbb{R}^m$, we use $e^{-v}$ ($\log{v}$) to denote a column vector in $\mathbb{R}^m$ with each component $e^{-v_i}\ (\log{v_i}),\ i\in[m]$.
	We use notations in bold to represent a constant matrix or vector with proper size. For example, $\mathbf{1}_{m\times m}$ represents an $m$ by $m$ matrix with all elements one, and $\mathbf{0}_{m}$ represents a column vector of size $m$ with all elements zeros.

	\section{Bilevel Model for Hyperparameter Selection in Logistic-SVC}\label{sec2}
	In this part, we establish the bilevel model for hyperparameter selection in logistic-loss SVC based on cross validation.
	
	In order to choose the hyperparameter in logistic-loss SVC, the $T$-fold cross validation works in the following way. First we split the data set evenly into $T$ parts \cite{LiZ}, each part denoted by
	$\left\lbrace \left( \bar{x}_i^{\left( j\right)},\bar{y}_i^{\left( j\right)}\right)\right\rbrace _{i=1}^{m_1},\ j\in [T],$
	where $m_1$ denotes the size of the $j$-th part. $\bar{y}_i^{(j)}\in\{\pm 1\}$ is the label corresponding to data $\bar{x}_i^{(j)}\in\mathbb{R}^n,i\in [m_1]$. For each fold $j\in[T]$, we take the $j$-th part as the validation set, and the rest parts as the training set (denote as $\left\lbrace \left( \hat{x}_i^{\left( j\right)},\hat{y}_i^{\left( j\right)}\right)\right\rbrace _{i=1}^{m_2}$) to train the model. The best parameter is chosen so that the total loss on each validation set is minimized. 
	
	To be specified, we start with the lower-level problem (take the $j$-th lower-level problem as an example), which is to search for a hyperplane ${w^{\left( j\right)}}^\top x=0$ such that the following logistic-loss SVC model is optimized (given $C>0$)
	\begin{equation} \label{eq_lower_ori}
		\min\limits_{w^{(j)}\in\mathbb{R}^n}\frac{1}{2}\left\|w^{\left(j\right)}\right\|_2^2+C\sum\limits_{i=1}^{m_2} J_{w^{\left(j\right)}} \left( \hat{x}_i^{\left( j\right)},\hat{y}_i^{\left( j\right)}\right):=F\left(w^{\left( j\right) }\right),\ j\in[T]
	\end{equation}
	where $J_{w^{\left(j\right)}}\left( \hat{x}_i^{\left( j\right)},\hat{y}_i^{\left( j\right)}\right)$ is the logistic-loss function (for $\hat{y}_i\in\left\lbrace \pm 1\right\rbrace $) defined by 
	\begin{equation*}
		J_{w^{\left(j\right)}}\left( \hat{x}_i^{\left( j\right)},\hat{y}_i^{\left( j\right)}\right)
		=\frac{1}{2}\left(1+\hat{y}_i^{(j)}\right)\log{\left(1+e^{-{\hat{x}_i}^{{(j)}^\top} w^{(j)}}\right)}
		+\frac{1}{2}\left(1-\hat{y}_i^{\left(j\right)}\right)\log{\left(1+e^{{\hat{x}_i}^{{(j)}^\top} w^{(j)}}\right)}.
	\end{equation*}
	
	In terms of the upper-level problem, the aim is to minimize the loss function over the validation set for all the $T$ folds. Here, we also use the logistic loss as our upper-level loss function \cite{LiT}, which is
	$\log{\left(1+e^{-\bar{y}_i^{\left(j\right)}\left({\bar{x}_i} ^ {{\left(j\right)}^\top} w^{\left(j\right)}\right)}\right)},\ j\in[T].$
	
	Overall, we reach the following bilevel model to choose the best parameter $C$
	\begin{equation}\label{eq_bilevel_ori}
		\begin{aligned}
			\min\limits_{\substack{C\in\mathbb{R},w^{\left(j\right)}\in\mathbb{R}^n,j\in[T]}} & \frac{1}{Tm_1}{\sum\limits_{t=1}^{T} {\sum\limits_{i=1}^{m_1} { \log{\left(1+e^{-\bar{y}_i^{\left(j\right)}\left({\bar{x}_i} ^ {{\left(j\right)}^\top} w^{\left(j\right)}\right)}\right)} }}}\\
			{\rm s.t.\quad} &C\ge0,\\
			&{\rm and\ for\ } j=1,2,...,T,\\
			&\qquad\  w^{\left(j\right)}=\arg\min\limits_{w^{\left(j\right)}\in\mathbb{R}^n}\left\lbrace \frac{1}{2}\left\|w^{\left(j\right)}\right\|_2^2+C\sum\limits_{i=1}^{m_2} J_{w^{\left(j\right)}}\left( \hat{x}_i^{\left( j\right)},\hat{y}_i^{\left( j\right)}\right)\right\rbrace .
		\end{aligned}
	\end{equation}
	
	\section{Single Level Reformulation and the Theoretical Properties}\label{sec3}
	In this section, we first reformulate the bilevel optimization problem as a single-level optimization problem based on the KKT conditions of the lower-level problems. Then we present some theoretical properties of the single-level problem, including the KKT conditions and the second-order sufficient conditions. 

	\subsection{Single-level reformulation}\label{subsec3-1}
	Note that each lower-level problem is an unconstrained optimization problem with smooth objective function $F\left(w^{(j)}\right)$. Due to the convexity of the logistic-loss function, the objective function in the lower-level problem is strongly convex. The solution is unique for each lower-level problem (given $C>0$). A natural way to represent the lower-level problem is by its first order optimality condition. That is,
	\begin{center}
		$\nabla F\left(w^{\left(j\right)}\right)=0,\ j\in [T],$
	\end{center}
	which is
	\begin{equation}\label{eq_kkt_1}
		w^{\left(j\right)}+\frac{1}{2}C\sum_{i=1}^{m_2}\left(h_{\hat{x}_i^{\left(j\right)}}\left(w^{(j)}\right)-\hat{y}_i^{\left(j\right)}\right)\hat{x}_i^{\left(j\right)}=\mathbf{0},\ 
		h_{\hat{x}_i^{\left(j\right)}}\left(w^{(j)}\right):=\frac{1-e^{-\hat{x}_i^{{\left(j\right)}^\top} w^{(j)}}}{1+e^{-\hat{x}_i^{{\left(j\right)}^\top} w^{(j)}}},\ j\in [T].
	\end{equation}
	
	By replacing each lower-level problem by (\ref{eq_kkt_1}), the bilevel problem (\ref{eq_bilevel_ori}) can be equivalently written into the following single-level optimization problem.
	\begin{equation}\label{eq_NLP}
		\begin{aligned}
			\min\limits_{\substack{C\in\mathbb{R},w^{\left(j\right)}\in\mathbb{R}^n,j\in[T]}} & \frac{1}{Tm_1}{\sum\limits_{t=1}^{T} {\sum\limits_{i=1}^{m_1} { \log{\left(1+e^{-\bar{y}_i^{\left(j\right)}\left({\bar{x}_i} ^ {{\left(j\right)}^\top} w^{\left(j\right)}\right)}\right)} }}}\\
			{\rm s.t.\quad} &C\ge0,\\
			&w^{\left(j\right)}+\frac{1}{2}C\sum_{i=1}^{m_2}\left(\hat{x}_i^{\left(j\right)}\left(h_{\hat{x}_i^{\left(j\right)}}\left(w^{(j)}\right)-\hat{y}_i^{\left(j\right)}\right)\right)=\mathbf{0},\ j\in[T].
		\end{aligned}
	\end{equation}
	
	For (\ref{eq_NLP}), it is a typical NLP with one simple lower bound constraint and a set of nonlinear equality constraints. Compared with the MPEC reformulation of the bilevel model for hyperparameter selection in $l_1$-SVC \cite{LiQ} and $l_2$-SVC \cite{LiZ}, the above resulting NLP demonstrates the advantages in the following sense. Firstly, there is no complementarity constraints in (\ref{eq_NLP}). Secondly, the objective function and the constraints in NLP are both smooth. Thirdly, the number of the equality constraints in (\ref{eq_NLP}) is $Tn$, which is much smaller than that in MPEC reformulation in \cite{LiQ,LiZ}. Finally, to solve (\ref{eq_NLP}), lots of efficient numerical algorithms for NLP can be applied, whereas for solving MPEC in \cite{LiQ,LiZ}, efficient yet fast algorithms are still highly in need.
	
	(\ref{eq_NLP}) can be equivalently written in the matrix form as follows
	\begin{equation}\label{eq_single_mat}
		\begin{aligned}
			\min\limits_{\substack{C\in\mathbb{R},w\in\mathbb{R}^{Tn}}}
			&\frac{1}{Tm_1}\mathbf{1}^\top\log{\left(\mathbf{1}+e^{-Aw}\right)}\\
			{\rm s.t.\quad} & \begin{cases} 
				C\geq0, \\
				w+\frac{1}{2}CX\left(h_X\left(w\right)-\hat{y}\right)=\mathbf{0}.
			\end{cases}
		\end{aligned}
	\end{equation}		
	Here $w\in \mathbb{R}^{Tn},\ \hat{y}\in\mathbb{R}^{Tm_2},\ X\in \mathbb{R}^{Tn\times Tm_2},\ A\in\mathbb{R}^{Tm_1\times Tn},\ h_X(w):\mathbb{R}^{Tn} \rightarrow \mathbb{R}^{Tm_2}$ are defined by
	\begin{equation*}
		w:=\left[ \begin{array}{c}
			w^{\left(1\right)}\\ \vdots \\w^{\left(T\right)}
		\end{array} \right],\ 
		\hat{y}:=\left[ \begin{array}{c}
			\hat{y}^{\left(1\right)}\\\vdots\\ \hat{y}^{\left(T\right)}
		\end{array} \right],\ 
		X:=\begin{bmatrix} 
			X^{\left(1\right)} & \cdots & \mathbf{0} \\
			\vdots &\ddots & \vdots \\
			\mathbf{0} & \cdots & X^{\left(T\right)}
		\end{bmatrix},\ 
		A:=\begin{bmatrix} 
			A^{\left(1\right)}& \cdots & \mathbf{0} \\
			\vdots & \ddots & \vdots \\
			\mathbf{0} & \cdots & A^{\left(T\right)}
		\end{bmatrix},
	\end{equation*}
	\begin{equation*}		
		h_X\left(w\right):=\left[ \begin{array}{c}
			h_{X^{(1)}}(w^{(1)})\\ \vdots \\ h_{X^{(T)}}(w^{(T)})
		\end{array} \right],\ 
		\hat{y}^{\left(j\right)}:=\left[ \begin{array}{c}
			\hat{y}_1^{\left(j\right)}\\ \vdots\\
			\hat{y}_{m_2}^{\left(j\right)}
		\end{array} \right]\in\mathbb{R}^{m_2},\ 
		X^{\left(j\right)}: = \left[\hat{x}_1^{\left(j\right)},\cdots,\hat{x}_{m_2}^{\left(j\right)}\right]\in\mathbb{R}^{n\times m_2},
	\end{equation*}
	\begin{equation*}		
		A^{\left(j\right)}:=\left[ \begin{array}{c}
			A_1^{(j)} \\ \vdots \\ A_{m_1}^{(j)}
		\end{array} \right]\in\mathbb{R}^{m_1\times n},\ 
		A_i^{(j)}:=\left(\bar{y}_i^{(j)}\bar{x}_i^{\left(j\right)}\right)^\top,\ 
		h_{X^{(j)}}(w^{(j)}):=\left[ \begin{array}{c}
			h_{{\hat{x}_1}^{(j)}}(w^{(j)})\\
			h_{{\hat{x}_2}^{(j)}}(w^{(j)})\\ \vdots \\
			h_{{\hat{x}_{m_2}}^{(j)}}(w^{(j)})
		\end{array} \right]\in\mathbb{R}^{m_2},\ j\in [T].
	\end{equation*}
	
	\subsection{KKT conditions for (\ref{eq_single_mat})}\label{subsec3-2}
	The Lagrange function of problem (\ref{eq_single_mat}) is as follows:
	\begin{equation}\label{eq_defL}
		\begin{aligned}
			L\left(C,w;\lambda,\mu\right)&=\frac{1}{Tm_1}\mathbf{1}^\top\log{\left(\mathbf{1}+e^{-Aw}\right)}-\lambda C+\mu^\top\left(w+\frac{1}{2}CX\left(h_X\left(w\right)-\hat{y}\right)\right)\\
			&=\frac{1}{Tm_1}f\left(w\right)-\lambda C+\mu^\top\left(w+Cg\left(w\right)\right),
		\end{aligned}
	\end{equation}
	where $\mu \in \mathbb{R}^{Tn},\ \lambda\in\mathbb{R}$ are the Lagrange multipliers corresponding to equality and inequality constraints, respectively, and
	\begin{equation*}
		f(w):= \mathbf{1}^\top\log {\left(\mathbf{1}+e^{-Aw}\right)},\ 
		g(w):= \frac{1}{2}X\left(h_X\left(w\right)-\hat{y}\right).
	\end{equation*}
	
	The KKT conditions of (\ref{eq_single_mat}) are given by 
	\begin{subnumcases}{\label{eq_kkt_ori}}
		\nabla_C L(C,w;\lambda,\mu)=-\lambda+\mu^\top g\left(w\right)=0,\label{eq_kkt_ori_a}\\
		\nabla_w L(C,w;\lambda,\mu)=\frac{1}{Tm_1}\nabla f(w)+\left(I_{Tn}+C\nabla g(w)\right)\mu = \mathbf{0},\label{eq_kkt_ori_b}\\
		0\le\lambda\bot C\geq0,\label{eq_kkt_ori_c}\\
		w+Cg\left(w\right)=\mathbf{0},\label{eq_kkt_ori_d}
	\end{subnumcases}
	with 
	\begin{equation}\label{eq_df}
		\nabla f(w)=-A^\top s(w)\in\mathbb{R}^{Tn},\ 
		\nabla g(w)=X{\rm Diag}(u(w))X^\top\in\mathbb{R}^{Tn\times Tn},
	\end{equation} 
	\begin{equation}\label{eq_sw}
		s\left(w\right)=\left[ \begin{array}{c}
			s^{(1)}(w^{(1)})\\ \vdots \\ s^{(T)}(w^{(T)})
		\end{array} \right]\in\mathbb{R}^{Tm_1},\ 
		s^{(j)}(w^{(j)})=\left[ \begin{array}{c}
			{\left(1+e^{A_1^{\left(j\right)}w^{\left(j\right)}}\right)}^{-1}\\
			\vdots \\
			{\left(1+e^{A_{m_1}^{\left(j\right)}w^{\left(j\right)}}\right)}^{-1}
		\end{array} \right]\in\mathbb{R}^{m_1},\ j\in [T],
	\end{equation}
	\begin{equation}\label{eq_uw}
		u\left(w\right)=\left[ \begin{array}{c}
			u^{(1)}(w^{(1)})\\ \vdots \\ u^{(T)}(w^{(T)})
		\end{array} \right]\in\mathbb{R}^{Tm_2},\ 
		u^{(j)}(w^{(j)})=\left[ \begin{array}{c}
			{\left(2+e^{-{\hat{x}_1^{(j)^\top} w^{(j)}}}+e^{\hat{x}_1^{(j)^\top} w^{(j)}}\right)}^{-1}\\ \vdots \\
			{\left(2+e^{-{\hat{x}_{m_2}^{(j)^\top} w^{(j)}}}+e^{\hat{x}_{m_2}^{(j)^\top} w^{(j)}}\right)}^{-1}\\
		\end{array} \right]\in\mathbb{R}^{m_2}.
	\end{equation}
	\vskip 1cm
	Let $v=[C;w]\in\mathbb{R}^{Tn+1}$. Denote the active index set for inequality constraint $C>0$ as follows
	\begin{equation}\label{eq_I}
		\mathcal{I}(v)=\{1\}\ {\rm if}\ C=0\ {\rm and}\ \mathcal{I}(v)=\emptyset\ {\rm if}\ C>0.		
	\end{equation}
	
	Below we show that for each feasible point $v\in\mathbb{R}^{Tn+1}$ of (\ref{eq_single_mat}), the linearly independent constraint qualification (LICQ) holds.
	\begin{proposition}
		LICQ holds at each feasible point $v\in\mathbb{R}^{Tn+1}$ of (\ref{eq_single_mat}).
	\end{proposition}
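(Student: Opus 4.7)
The plan is to verify LICQ by a direct case analysis on whether the inequality constraint $C \ge 0$ is active or not, using the explicit formula for $\nabla g(w)$ derived in (\ref{eq_df}).

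First I would assemble the gradients of the active constraints at a feasible $v = [C;w]$. The equality constraint $H(v) := w + Cg(w) = \mathbf{0}$ has Jacobian with respect to $v = (C,w)$ given by the $Tn \times (Tn+1)$ matrix
\begin{equation*}
\nabla H(v) = \bigl[\, g(w),\ I_{Tn} + C\nabla g(w)\,\bigr],
\end{equation*}
while the inequality $-C \le 0$ has gradient $[-1;\mathbf{0}_{Tn}]$. LICQ at $v$ amounts to linear independence of the appropriate subset of these rows.

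Next I would split into two cases according to the definition of $\mathcal{I}(v)$ in (\ref{eq_I}). \emph{Case 1:} $C > 0$, so $\mathcal{I}(v)=\emptyset$ and only the $Tn$ equality gradients are active. Here the key observation is that $u(w)$ in (\ref{eq_uw}) is strictly positive componentwise, since each entry has the form $1/(2+e^{-t}+e^t) > 0$. Consequently $\nabla g(w) = X\,\mathrm{Diag}(u(w))\,X^\top$ is positive semidefinite, so $I_{Tn} + C\nabla g(w)$ is positive definite and in particular invertible. Since this is a $Tn \times Tn$ submatrix of $\nabla H(v)$ with full rank $Tn$, the rows of $\nabla H(v)$ are linearly independent in $\mathbb{R}^{Tn+1}$. \emph{Case 2:} $C = 0$, so the inequality is also active and $\nabla H(v) = [\,g(w),\, I_{Tn}\,]$. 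Stacking the inequality gradient on top, we get the $(Tn+1)\times(Tn+1)$ block matrix
\begin{equation*}
M = \begin{bmatrix} -1 & \mathbf{0}^\top \\ g(w) & I_{Tn} \end{bmatrix},
\end{equation*}
whose determinant is $(-1)\cdot\det(I_{Tn}) = -1 \neq 0$, so its rows are linearly independent.

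I do not expect any real obstacle here; the argument is essentially a short calculation. The only point that requires a moment's care is verifying that the $Tn \times Tn$ block $I_{Tn} + C\nabla g(w)$ is nonsingular for every $C \ge 0$, and this is immediate from the sign of $u(w)$ together with the $X\,\mathrm{Diag}(u(w))\,X^\top$ factorization in (\ref{eq_df}). No constraint qualification machinery beyond the definition of LICQ is needed.
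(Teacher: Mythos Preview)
Your proposal is correct and follows essentially the same approach as the paper: both arguments compute the Jacobian block $[\,g(w),\ I_{Tn}+C\nabla g(w)\,]$, use the strict positivity of $u(w)$ to conclude that $I_{Tn}+C\nabla g(w)$ is positive definite for all $C\ge 0$, and then split into the two cases $C>0$ and $C=0$ exactly as you do. The only cosmetic difference is that in the case $C=0$ the paper simply observes the stacked matrix is $\begin{bmatrix} g(w) & I_{Tn}\\ 1 & \mathbf{0}\end{bmatrix}$ and declares its rows linearly independent, whereas you compute the determinant explicitly.
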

	\begin{proof}
		For simplicity, denote
		\begin{equation*}
			\theta(v):=w+\frac{1}{2}CX\left(h_X\left(w\right)-\hat{y}\right)=
			\left[\begin{array}{c}
				\theta_1(v) \\ \vdots \\ \theta_{Tn}(v)
			\end{array}\right]\in\mathbb{R}^{Tn}.
		\end{equation*}
		LICQ holds at $v$ if and only if the following vectors 
		\begin{equation}\label{eq_licq}
			\left\lbrace \nabla\theta_i(v)|\ i\in[Tn]\right\rbrace \cup \left\lbrace [\ (1;0;0;...;0)\in\mathbb{R}^{Tn+1}]|\ {\rm if}\ \mathcal{I}(v)=\{1\}\right\rbrace 
		\end{equation}
		are linearly independent.
		
		Note that 
		\begin{equation}\label{eq_d_theta}
			\begin{aligned}
				\left[\begin{array}{c}
					\nabla\theta_1(v)^\top \\ \vdots \\ \nabla\theta_{Tn}(v)^\top
				\end{array}\right]
				&=\left[\begin{array}{cc}
					\nabla_C\theta_1(v) & \nabla_w\theta_1(v)^\top\\ \vdots & \vdots\\ \nabla_C\theta_{Tn}(v) & \nabla_w\theta_{Tn}(v)^\top
				\end{array}\right] \\
				&=\left[\begin{array}{cc} g(w) & I_{Tn}+CX{\rm Diag}(u(w))X^\top \end{array}\right]\\
				&:=\left[\begin{array}{cc} g(w) & P(v) \end{array}\right].
			\end{aligned}
		\end{equation}
		By definition of $u(w)$ in (\ref{eq_uw}), all elements in $u(w)$ is positive. It implies that $X{\rm Diag}(u)X^\top$ is positive semidefinite. Therefore, $P(v)$ is positive definite for any $C\ge0$.
		
		Consider the following two cases. If $C=0$, $\mathcal{I}(v)=\{1\}$. The row vectors in (\ref{eq_licq}) reduces to the row vectors in the following matrix
		\begin{equation*}
			\left[\begin{array}{cc}
				g(w) & P(v) \\ 1 & \mathbf{0}_{1\times Tn} 
			\end{array}\right]=
			\left[\begin{array}{cc}
				g(w) & I_{Tn} \\ 1 & \mathbf{0}_{1\times Tn} 
			\end{array}\right]\in\mathbb{R}^{Tn}.
		\end{equation*}
		Obviously, the row vectors in the above matrix are linearly independent, implying LICQ holds.
		If $C>0$, $\mathcal{I}(v)=\emptyset$. The row vectors in (\ref{eq_d_theta}) are linearly independent due to the positive definiteness of $P(v)$. Overall, LICQ holds at any feasible point $v$ of (\ref{eq_single_mat}).
	\end{proof}

	\subsection{Second-order optimality conditions for (\ref{eq_single_mat})}\label{subsec3-3}
	To further analyze the second-order optimality conditions for (\ref{eq_single_mat}), we first need to characterize the critical cone. Assume that $(v^*,\lambda^*,\mu^*)$ satisfy the KKT system (\ref{eq_kkt_ori}).
	
	\begin{proposition}\label{Proposition_cone}
		Let $(v^*,\lambda^*,\mu^*)$ satisfy the KKT conditions (\ref{eq_kkt_ori}). The critical cone $\mathcal{C}(v^*,\lambda^*,\mu^*)$ takes the following form:\\
		(i) If $C^*>0$, 
		$\mathcal{C}(v^*,\lambda^*,\mu^*)=\left\lbrace d\in \mathbb{R}^{Tn+1}\ \Bigg|\ d=\left[\begin{array}{c} 1\\-P(v^*)^{-1} g(w^*)\end{array}\right]d^C,\ d^C\in\mathbb{R} \right\rbrace.$\\
		(ii) If $C^*=0$ and $\lambda^*=0$, 
		$\mathcal{C}(v^*;\lambda^*,\mu^*)=\left\lbrace d\ \Bigg|\ d=\left[\begin{array}{c}1\\-g\left(w^*\right)
		\end{array}\right] d^C,\ d^C\ge0 \right\rbrace. $\\
		(iii) If $C^*=0$ and $\lambda^*>0$,
		$\mathcal{C}(v^*;\lambda^*,\mu^*)=\left\lbrace \mathbf{0}_{Tn+1} \right\rbrace$.
	\end{proposition}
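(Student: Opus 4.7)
The plan is to invoke the standard critical-cone formula for an NLP at a KKT point, then specialize it using the structure of problem (\ref{eq_single_mat}). Since LICQ holds at every feasible point by the preceding proposition, the critical cone has the explicit description
\begin{equation*}
	\mathcal{C}(v^*,\lambda^*,\mu^*)=\left\{d\in\mathbb{R}^{Tn+1}\ \Big|\ \nabla\theta_i(v^*)^{\top}d=0\ \forall i\in[Tn],\ d^{C}\ge 0\ \text{if}\ C^*=0,\ d^{C}=0\ \text{if}\ \lambda^*>0\right\},
\end{equation*}
where I write $d=[d^{C};d^{w}]$ with $d^{C}\in\mathbb{R}$ and $d^{w}\in\mathbb{R}^{Tn}$, and the inequality constraint is $-C\le 0$ whose gradient is $-e_1$.

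Next, I would linearize the equality constraint. From (\ref{eq_d_theta}),
\begin{equation*}
	\nabla\theta(v^*)^{\top}d = g(w^*)d^{C}+P(v^*)d^{w}=\mathbf{0}.
\end{equation*}
The previous proposition established that $P(v^*)$ is positive definite, hence invertible, so the linearized equality is equivalent to
\begin{equation*}
	d^{w}=-P(v^*)^{-1}g(w^*)d^{C}.
\end{equation*}
Thus every critical direction is completely parameterized by the scalar $d^{C}$, subject to sign restrictions inherited from the active inequality and the complementarity relation (\ref{eq_kkt_ori_c}).

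Finally, I would split into three cases according to (\ref{eq_kkt_ori_c}). Case (i): $C^*>0$ forces $\lambda^*=0$; the inequality is inactive, so $d^{C}\in\mathbb{R}$ is free, producing the one-dimensional subspace claimed. Case (ii): $C^*=0$ and $\lambda^*=0$; the inequality is active but has zero multiplier, so $d^{C}\ge 0$. Case (iii): $C^*=0$ and $\lambda^*>0$; the inequality is active with strictly positive multiplier, forcing $d^{C}=0$, which by the linearized equality also forces $d^{w}=\mathbf{0}$, giving $\mathcal{C}=\{\mathbf{0}_{Tn+1}\}$.

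To match the stated formulas in (ii) and (iii), I would simply note that when $C^*=0$, the definition $P(v^*)=I_{Tn}+C^*X\mathrm{Diag}(u(w^*))X^{\top}$ collapses to $P(v^*)=I_{Tn}$, so $P(v^*)^{-1}g(w^*)=g(w^*)$ exactly as written. The main obstacle is essentially bookkeeping: making sure the sign conventions on the inequality $-C\le 0$ and the complementarity conditions are threaded correctly into each case. No new analytic machinery beyond invertibility of $P(v^*)$ — already established — is required.
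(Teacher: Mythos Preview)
Your proposal is correct and follows essentially the same route as the paper: both arguments write out the critical-cone formula, use the linearized equality $g(w^*)d^{C}+P(v^*)d^{w}=0$ together with the invertibility of $P(v^*)$ to eliminate $d^{w}$, and then split into the three complementarity cases, noting that $P(v^*)=I_{Tn}$ when $C^*=0$. The only cosmetic difference is that in case (iii) the paper displays the full $(Tn+1)\times(Tn+1)$ coefficient matrix and argues its nonsingularity, whereas you simply observe $d^{C}=0\Rightarrow d^{w}=\mathbf{0}$; both are equally valid.
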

	
	\begin{proof}
		The set of linearized feasible directions at $v^*$, denoted by $\mathcal{F}(v^*)$, is given by
		\begin{equation*}
			\begin{aligned}
				\mathcal{F}(v^*)=\{ d=\left[ d^C; d^w\right]\in \mathbb{R}^{Tn+1}\ |\ 
				&d^C\in\mathbb{R},\ d^w\in\mathbb{R}^{Tn},\ g(w^*)d^C+P(v^*)d^w=0,\\
				&d^C\ge0\ {\rm if}\ \mathcal{I}(v^*)\neq\emptyset\}.
			\end{aligned}
		\end{equation*}
		
		The critical cone at $(v^*,\lambda^*,\mu^*)$ is defined as follows
		\begin{equation}\label{eq_cone}
			\begin{aligned}
				\mathcal{C}(v^*,\lambda^*,\mu^*) & =\{d\in\mathcal{F}(v^*)\ |\ d^C=0,\ i\in \mathcal{I}(v^*)\ {\rm with}\ \lambda_i^*>0\}\\
				& =\{ d=\left[d^C; d^w\right]\in \mathbb{R}^{Tn+1}\ |\ d^C\in\mathbb{R},\ d^w\in\mathbb{R}^{Tn},\ g(w^*)d^C+P(v^*) d^w=0,\\
				& \qquad\qquad\qquad\qquad\qquad\qquad\  d^C=0 \ {\rm if}\ C^*=0 \ {\rm and}\ \lambda^*>0,\\		
				& \qquad\qquad\qquad\qquad\qquad\qquad\  d^C\ge0 \ {\rm if}\ C^*=0 \ {\rm and}\ \lambda^*=0\}.
			\end{aligned}
		\end{equation}
		
		Below we discuss three situations.
		
		Case 1. $C^*>0$. In this case, $\mathcal{I}(v^*)=\{1\}$. By (\ref{eq_cone}), $\mathcal{C}(v^*,\lambda^*,\mu^*)$ reduces to the following form:
		\begin{equation*}
			C(v^*,\lambda^*,\mu^*)=\{d=\left[d^C; d^w\right]\in \mathbb{R}^{Tn+1}\ |\ d^C\in\mathbb{R},\ d^w\in\mathbb{R}^{Tn},\ g(w^*)d^C+P(v^*) d^w=0\}.
		\end{equation*}
		Note that $P(v^*)$ is positive definite for $C\ge0$. We obtain that $d^w=-P(v^*)^{-1}g(w^*)d^C$. In this case,
		\begin{equation*}
			\mathcal{C}(v^*,\lambda^*,\mu^*)=\left\lbrace \left[\begin{array}{c}1\\-P(v^*)^{-1}g(w^*)\end{array}\right]d^C\ \Bigg|\ d^C\in\mathbb{R}
			\right\rbrace.
		\end{equation*}
		
		Case 2. $C^*=0$ and $\lambda^*=0$. In this case, (\ref{eq_cone}) reduces to the following 
		\begin{equation}\label{c2}
			\mathcal{C}(v^*,\lambda^*,\mu^*) = \left\lbrace d=\left[d^C;d^w\right]\in \mathbb{R}^{Tn+1} \Big| g(w^*)d^C+d^w=0,\ d^C\ge0,\ d^w\in\mathbb{R}^{Tn} \right\rbrace.
		\end{equation}
		One can obtain that $d^w=-g(w)d^C$, which gives that $d\in \mathcal{C}(v^*,\lambda^*,\mu^*)$ takes the following form
		\begin{equation*}
			d=\left[ \begin{array}{c} 1 \\ -g(w^*) \end{array} \right]d^C,\ d^C\ge0.
		\end{equation*}
		
		Case 3. $C^*=0$ and $\lambda^*>0$. (\ref{eq_cone}) reduces to the following form
		\begin{equation*}	
			\mathcal{C}(v^*,\lambda^*,\mu^*) = \left\lbrace d\in \mathbb{R}^{Tn+1} \Bigg| \left[ \begin{array}{cc} 1 & \mathbf{0}_{1\times Tn} \\ g(w^*) & I_{Tn} \end{array} \right]   d = \mathbf{0} \right\rbrace.
		\end{equation*}
		Since $\left[ \begin{array}{cc} 1 & \mathbf{0}_{1\times Tn} \\ g(w^*) & I_{Tn} \end{array} \right]$ is nonsingular, $\mathcal{C}(v^*,\lambda^*,\mu^*)=\left\lbrace \mathbf{0}_{Tn+1}\right\rbrace $. The proof is finished.	
	\end{proof}
	
	Define $\alpha(\mu,v)\in\mathbb{R}^{Tn\times Tn}$,  $z(\mu,v)\in\mathbb{R}$ and $\iota(\mu)\in\mathbb{R}$ by
	\begin{subnumcases}{}
		\alpha(\mu,v):=\frac{1}{Tm_1}A^\top {\rm Diag}(p(w))A+CX{\rm Diag}(X^\top\mu){\rm Diag}(q(w))X^\top
		\label{eq_alpha}\\
		z(\mu,v):=g \left(w\right) ^\top P(v)^{-1} \left( \alpha\left(\mu,v\right) P\left(v\right) ^{-1} g\left(w\right) - 2\nabla g\left(w\right) \mu\right),\label{eq_z}\\
		\iota(\mu):=\frac{1}{4}\mu^\top XX^\top X\hat{y} +\frac{1}{16Tm_1}\left(X\hat{y}\right)^\top A^\top AX\hat{y}.	
	\end{subnumcases}
	\begin{assumption}\label{Assumption1}
		Assume that $z(\mu^*,w^*)>0$.
	\end{assumption}
	\begin{assumption}\label{Assumption2}
		Assume that $\iota(\mu^*)>0$.
	\end{assumption}
	\begin{theorem}\label{Theorem_hessian}
		Let $v^*=\left[C^*;w^*\right]\in\mathbb{R}^{1+Tn}$ with the Lagrange multiplier  $\gamma^*=\left[\lambda^*;\mu^*\right]\in\mathbb{R}^{1+Tn}$ satisfy the KKT conditions (\ref{eq_kkt_ori}). 
		If one of the three following conditions holds, then $v^*$ is a strict local minimizer of (\ref{eq_single_mat}):\\
	 	(i) $C^*>0$ and Assumption \ref{Assumption1} hold;\\
		(ii) $C^*=0,\ \lambda^*=0$ and Assumption \ref{Assumption2} hold;\\
		(iii) $C^*=0$ and $\lambda^*>0$.
	\end{theorem}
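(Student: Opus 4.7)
The plan is to invoke the standard second-order sufficient condition (SOSC) for NLP. Since LICQ has already been proved at every feasible point, the multipliers $(\lambda^*,\mu^*)$ are uniquely determined, and it suffices to show that the Hessian of the Lagrangian $\nabla^2_{vv}L(v^*;\lambda^*,\mu^*)$ is positive definite on the critical cone $\mathcal{C}(v^*,\lambda^*,\mu^*)$ already characterized in Proposition \ref{Proposition_cone}. I would therefore organize the proof around the same three cases.

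First I would assemble $\nabla^2_{vv}L$ block by block from (\ref{eq_defL}). Because $C$ enters $L$ only linearly in $-\lambda C+C\mu^\top g(w)$, differentiating gives $\nabla^2_{CC}L=0$ and the off-diagonal block $\nabla^2_{Cw}L=\nabla g(w^*)\mu^*$, while differentiating $\nabla_w L$ in (\ref{eq_kkt_ori_b}) a second time in $w$ yields exactly the matrix $\alpha(\mu^*,v^*)$ in (\ref{eq_alpha}) (here $p(w)$ comes from differentiating $s(w)$ and $q(w)$ from differentiating $u(w)$). Thus
\begin{equation*}
\nabla^2_{vv}L(v^*;\lambda^*,\mu^*)=\begin{bmatrix} 0 & (\nabla g(w^*)\mu^*)^\top \\ \nabla g(w^*)\mu^* & \alpha(\mu^*,v^*)\end{bmatrix}.
\end{equation*}
Plugging in the case (i) direction $d=d^C[1;-P(v^*)^{-1}g(w^*)]$ and using the symmetry of $\nabla g(w^*)$, block multiplication collapses $d^\top\nabla^2_{vv}L\,d$ to $(d^C)^2\,z(\mu^*,v^*)$, so Assumption \ref{Assumption1} delivers strict positivity whenever $d\ne 0$.

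For case (ii), I would first exploit the KKT system: $C^*=0$ together with (\ref{eq_kkt_ori_d}) forces $w^*=\mathbf{0}$, so $s(w^*)=\tfrac12\mathbf{1}$, $u(w^*)=\tfrac14\mathbf{1}$, $p(w^*)=\tfrac14\mathbf{1}$, $g(w^*)=-\tfrac12 X\hat y$, and (\ref{eq_kkt_ori_b}) with $\lambda^*=0$ pins down $\mu^*=\tfrac{1}{2Tm_1}A^\top\mathbf{1}$. Substituting $d=d^C[1;-g(w^*)]$ into the Hessian and inserting these explicit values reduces $d^\top\nabla^2_{vv}L\,d$ precisely to $(d^C)^2\iota(\mu^*)$ by matching the two summands in the definition of $\iota$. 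Since the cone only requires $d^C\ge 0$, any nonzero $d$ has $d^C>0$, and Assumption \ref{Assumption2} yields the needed strict positivity. Case (iii) is immediate: the critical cone is $\{\mathbf{0}\}$ by Proposition \ref{Proposition_cone}, so SOSC is vacuous. In every case SOSC certifies $v^*$ as a strict local minimizer of (\ref{eq_single_mat}).

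The main obstacle is case (ii): one has to use the KKT system to collapse to $w^*=\mathbf{0}$, then evaluate $p(w^*)$ and $u(w^*)$ at zero, and finally align the resulting scalar with the precise coefficients $\tfrac14$ and $\tfrac{1}{16Tm_1}$ in $\iota(\mu^*)$. Case (i) is algebraically heavier but entirely mechanical once the Hessian is in hand, and case (iii) requires no calculation at all.
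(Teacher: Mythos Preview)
Your proposal is correct and follows essentially the same route as the paper: compute the block structure of $\nabla^2_{vv}L$, plug in the critical-cone directions from Proposition~\ref{Proposition_cone} case by case, reduce the quadratic form to $(d^C)^2 z(\mu^*,v^*)$ in case~(i) and $(d^C)^2\iota(\mu^*)$ in case~(ii), and observe case~(iii) is vacuous. Your extra step of solving (\ref{eq_kkt_ori_b}) for $\mu^*$ when $C^*=0$ is correct but unnecessary (and note it is $C^*=0$, not $\lambda^*=0$, that enters that equation); the paper leaves $\iota(\mu^*)$ in terms of $\mu^*$ and invokes Assumption~\ref{Assumption2} directly.
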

	\begin{proof}
		First, we derive the formulation of $\nabla_{vv}^2 L\left(v^*;\lambda^*,\mu^*\right)$. By the definition of Lagrange function in (\ref{eq_defL}), it holds that 
		\begin{equation*}
			\nabla_{vv}^2L\left(v;\lambda,\mu\right)
			=\left[\begin{array}{cc}
				\nabla_{CC}^2 L(v;\lambda,\mu) & \nabla_{Cw}^2 L(v;\lambda,\mu) \\ 
				\nabla_{Cw}^2 L(v;\lambda,\mu)^\top & \nabla_{ww}^2 L(v;\lambda,\mu)
			\end{array}\right].
		\end{equation*}
		
		Note that $\nabla_{CC}^2 L(v;\lambda,\mu)=0$, $\nabla_{Cw}^2 L(v;\lambda,\mu)=\mu^\top \nabla g(w)=\mu^\top X{\rm Diag}(u(w))X^\top$.
		
		Below we derive $\nabla_{ww}^2 L(v;\lambda,\mu)$. By (\ref{eq_kkt_ori_b}), it holds that
		\begin{equation*}
			\begin{aligned}
				\nabla_{ww}^2 L(v;\lambda,\mu)
				& = \nabla_w \left(\frac{1}{Tm_1}\nabla f(w)+ C\nabla g(w) \mu\right)\qquad\qquad\quad\ \ ({\rm by\ (\ref{eq_kkt_ori_b})})\\
				& = \nabla _w\left(\frac{-A^\top s(w)}{Tm_1}+ CX{\rm Diag}(u(w))X^\top\mu\right)\qquad({\rm by\ (\ref{eq_df})})\\
				& = \nabla _w\left(\frac{-A^\top s(w)}{Tm_1}+ CX{\rm Diag}(X^\top\mu)u(w)\right)\\
				& := \frac{1}{Tm_1}A^\top {\rm Diag}(p(w))A+CX{\rm Diag}(X^\top\mu){\rm Diag}(q(w))X^\top,
			\end{aligned}
		\end{equation*}
		where
		\begin{equation*}
			p\left(w\right):=\left[ \begin{array}{c}
				p^{(1)}(w^{(1)})\\ \vdots \\ p^{(T)}(w^{(T)})
			\end{array} \right]\in\mathbb{R}^{Tm_1},\ 
			p^{(j)}(w^{(j)}):=\left[ \begin{array}{c}
				\frac{e^{A_1^{\left(j\right)}w^{\left(j\right)}}}{\left(1+e^{A_i^{\left(j\right)}w^{\left(j\right)}}\right)^2}\\ \vdots \\
				\frac{e^{A_{m_1}^{\left(j\right)}w^{\left(j\right)}}}{\left(1+e^{A_i^{\left(j\right)}w^{\left(j\right)}}\right)^2}\\
			\end{array} \right]\in\mathbb{R}^{m_1},\ j\in[T],
		\end{equation*}
		
		\begin{equation*}
			q\left(w\right):=\left[ \begin{array}{c}
				q^{(1)}(w^{(1)})\\ \vdots \\ q^{(T)}(w^{(T)})
			\end{array} \right]\in\mathbb{R}^{Tm_2},\
			q^{(j)}(w^{(j)}):=\left[ \begin{array}{c}
				\frac{ e^{ -\hat{x}_1^ {{\left(j\right)}^\top} w^{\left(j\right)} }
					\left(e^{ -\hat{x}_1^ {{\left(j\right)}^\top} w^{\left(j\right)}}-1\right) } 
				{\left(1+e^{ -\hat{x}_1^ {{\left(j\right)}^\top} w^{\left(j\right)}}\right)^3}\\ \vdots \\
				\frac{ e^{ -\hat{x}_{m_2}^ {{\left(j\right)}^\top} w^{\left(j\right)} }
					\left(e^{ -\hat{x}_{m_2}^ {{\left(j\right)}^\top} w^{\left(j\right)}}-1\right) } 
				{\left(1+e^{ -\hat{x}_{m_2}^ {{\left(j\right)}^\top} w^{\left(j\right)}}\right)^3}\\
			\end{array} \right]\in\mathbb{R}^{m_2}.
		\end{equation*}
		
		For $d=[d^C;d^w]\in\mathbb{R}^{Tn+1}\in \mathcal{C}(v^*,\lambda^*,\mu^*)$, by Proposition \ref{Proposition_cone}, the second-order sufficient condition can be discussed in three cases.
		
		(i) $C^*>0$. For any $d\in \mathcal{C}(v^*,\lambda^*,\mu^*)$, by Proposition \ref{Proposition_cone} (i), it holds that
		\begin{equation*}
			\begin{aligned}
				d^\top\nabla_{vv}^2L\left(v^*;\lambda^*,\mu^*\right)d&=-2(d^C)^2\mu^{*^\top} X{\rm Diag}(u(w^*))X^\top P(v^*)^{-1}g(w^*)\\
				&\qquad+(d^C)^2\left(P(v^*)^{-1}g(w^*)\right)^\top\nabla_{ww}^2 L(v;\lambda^*,\mu^*)P(v^*)^{-1}g(w^*)\\
				&=(d^C)^2z(\mu^*,w^*),\ d^C\in\mathbb{R}.
			\end{aligned}
		\end{equation*}
		By Assumption \ref{Assumption1}, $d^\top\nabla_{vv}^2L\left(v^*;\lambda^*,\mu^*\right)d>0$, for any $d\in \mathcal{C}(v^*,\lambda^*,\mu^*),\ d\neq 0$. Then $v^*$ is a strict local minimizer of problem (\ref{eq_single_mat}).
		
		(ii) $C^*=0$ and $\lambda^*=0$. By (\ref{eq_kkt_ori_d}), we can get $w^*=\mathbf{0}$, and by the definition of $g(w), u(w)$, $p(w)$ and $q(w)$, we get $g(w^*)=-\frac{1}{2}X\hat{y}, u(w^*)=p(w^*)=\mathbf{\frac{1}{4}}$ and $q(w^*)=\mathbf{0}$. Then $L(v;\lambda^*,\mu^*)=\frac{1}{4Tm_1}A^\top A$.
		By Proposition \ref{Proposition_cone} (ii), for any $d\in \mathcal{C}(v^*,\lambda^*,\mu^*)$, it holds that
		\begin{equation*}
			\begin{aligned}
				d^\top\nabla_{vv}^2L\left(v^*;\lambda^*,\mu^*\right)d
				&=\frac{1}{4}(d^C)^2\mu^{*^\top} XX^\top X\hat{y} +\frac{1}{16Tm_1}(d^C)^2\left(X\hat{y}\right)^\top A^\top AX\hat{y}\\
				&= (d^C)^2\iota(\mu^*),\ d^C\ge0.
			\end{aligned}
		\end{equation*}
		By Assumption \ref{Assumption2}, $d^\top\nabla_{vv}^2L\left(v^*;\lambda^*,\mu^*\right)d >0$, for any $d\in \mathcal{C}(v^*,\lambda^*,\mu^*),\ d\neq 0$. Then $v^*$ is a strict local minimizer of problem (\ref{eq_single_mat}).
		
		(iii) $C^*=0$ and $\lambda^*>0$. For any $d\in \mathcal{C}(v^*,\lambda^*,\mu^*),\ d\neq 0$, it automatically holds that  $d^\top\nabla_{vv}^2L\left(v^*;\lambda^*,\mu^*\right)d >0$. Then $v^*$ is a strict local minimizer of problem (\ref{eq_single_mat}).
	\end{proof}
	
	\section{A Squared Smoothing Newton Method}\label{sec4}
	In this section, we will solve the KKT conditions in (\ref{eq_kkt_ori}) by a squared smoothing Newton method. First, we smooth the complementarity conditions with Huber smoothing function \cite{Liang}, and then combine the bi-conjugate gradient method (Bi-CG) \cite{Barrett} and a square smooth Newton method \cite{Liang} to solve the problem. Finally, we prove the superlinear converge rate of the proposed method.
	
	By eliminating $\lambda=-\mu^\top g\left(w\right)$ by (\ref{eq_kkt_ori_a}),  we get the reduced KKT conditions for problem (\ref{eq_single_mat}), that is,\\
	\begin{subnumcases}{\label{eq_kkt_re}}
		0\le\mu^\top g\left(w\right)\bot C\geq0,\label{eq_kkt_re_a}\\
		\frac{1}{Tm_1}\nabla f(w)+\left(I_{Tn}+C\nabla g(w)\right)\mu = \mathbf{0},\label{eq_kkt_re_b}\\
		w+Cg\left(w\right)=\mathbf{0}.\label{eq_kkt_re_c}
	\end{subnumcases}
	
	\subsection{Smoothing system based on Huber smoothing function} \label{subsec4-1}
	(\ref{eq_kkt_re_a}) is a nonsmooth complementarity condition. There are numerous methods to transform this difficult condition to its smoothing equivalent system in the literature. One of the most widely used methods is the smoothing Newton method, which relies on an appropriate smoothing function. In order to deal with the nonsmooth plus function $\max\left\lbrace 0,t\right\rbrace,t\in\mathbb{R}$, the most commonly used smoothing function is Chen-Harker-Kanzow-Smale (CHKS) function, that is, 
	\begin{equation*}
		\xi(\epsilon,t)=\frac{\sqrt{t^2+4\epsilon^2}+t}{2},(\epsilon,t)\in\mathbb{R}\times\mathbb{R},
	\end{equation*}
	which has been extensively studied and used for solving nonlinear complementarity problems \cite{Chen,Kanzow,Smale}. Besides, some other smoothing functions whose properties have also been well-studied \cite{Qi,Sun}. However, it is easy to see that $\xi(\epsilon,t)$ maps any negative number $t$ to a positive one when $\epsilon\neq0$. Thus, it destroys the possible sparsity structure when evaluating the Jacobian of the merit function \cite{Liang}. Hence, smoothing Newton methods based on the CHKS function would require more computational effort. 
	To resolve this issue, we use the Huber smoothing function proposed by Pinar and Zenios \cite{Pinar} which maps any negative number to zero so that the underlying sparsity structure from the plus function is inherited.
	
	It is easy to verify that the following holds
	\begin{equation}\label{eq_plus}
		0\le a \bot b \geq0 \Leftrightarrow b-\max\left\lbrace b-a, 0\right\rbrace=0.
	\end{equation}
	
	By (\ref{eq_plus}), we convert (\ref{eq_kkt_re_a}) to its equivalent equation, that is,
	\begin{equation} \label{eq_max}
		C-\max\left\lbrace C-\mu^\top g\left(w\right),0\right\rbrace=0.
	\end{equation}
	Then we obtain the following equivalent system of (\ref{eq_kkt_re})
	\begin{equation}\label{eq_kkt_max}
		\mathcal{K}(C,w,\mu)=\left[ \begin{array}{c}
			C-\max\left\lbrace C-\mu^\top g\left(w\right),0\right\rbrace\\
			\frac{1}{Tm_1}\nabla f(w) +\left(I_{Tn}+C\nabla g(w)\right)\mu\\
			w+Cg\left(w\right)
		\end{array}\right]=\mathbf{0}.
	\end{equation}
	
	Note that $\rho\left(t\right)=\max\left\lbrace 0,t\right\rbrace, t\in\mathbb{R}$, is not differentiable at $t=0$. We consider its Huber smoothing (or approximation) function \cite{Liang} defined
	as follows:
	\begin{equation}\label{eq_huber}
		h\left(\epsilon,t\right)=
		\begin{cases}
			t-\frac{\left|\epsilon\right|}{2},\ t>\left|\epsilon\right|\\
			\frac{t^2}{2\left|\epsilon\right|},\ 0\le t\le\left|\epsilon\right|\\
			0,\ t<0
		\end{cases},\ 
		\forall\left(\epsilon,t\right)\in\mathbb{R}\backslash\left\lbrace 0\right\rbrace\times\mathbb{R},
		h\left(0,t\right)=\rho(t).
	\end{equation}
	We replace the nonsmooth equation (\ref{eq_max}) by the following smooth equation
	\begin{equation*}
		C-h\left(\epsilon,C-\mu^\top g\left(w\right)\right)=0.
	\end{equation*}
	Following the idea in \cite{Liang}, we solve the following smoothing system for KKT system (\ref{eq_kkt_max})
	\begin{equation}\label{eq_E_hat}
		{\hat{\mathcal{E}}}\left(\epsilon,C,\mu,w\right)=\left[\begin{array}{c}
			\epsilon \\ 
			\left(1+\kappa\left|\epsilon\right|\right)C-h\left(\epsilon,C-\mu^\top g\left(w\right)\right)\\
			\frac{1}{Tm_1}\nabla f(w)+\left(I_{Tn}+C\nabla g(w)\right)\mu\\
			w+Cg\left(w\right)
		\end{array}\right]:=\left[\begin{array}{c}
			\epsilon \\ \mathcal{E}\left(\epsilon,C,\mu,w\right) \end{array}\right]=\mathbf{0},
	\end{equation}
	where $\kappa>0$ is a given constant.
	
	Note that $\mathcal{E}(\cdot)$ is a continuously differentiable system around any $\left(\epsilon,C,\mu,w\right)$ for any $\epsilon>0$. Also, as Liang et al. proved in \cite{Liang}, that it satisfies
	\begin{equation*}
		\mathcal{E}\left(\epsilon^k,C^k,\mu^k,w^k\right)\rightarrow\mathcal{K}\left(C,\mu,w\right),
		\ {\rm as}\ \left(\epsilon^k,C^k,\mu^k,w^k\right)\rightarrow\left(0,C,\mu,w\right).
	\end{equation*}
	
	Note that adding the perturbation terms $\kappa\left|\epsilon\right| C$ for constructing the smoothing function of $\mathcal{K}$ is crucial in our algorithm since it ensures the correctness of our proposed algorithm.
	
	\subsection{A squared smoothing Newton method}\label{subsec4-2}
	In this subsection, we will solve the nonlinear equation system in (\ref{eq_E_hat}) by combining the bi-conjugate gradient method (Bi-CG) \cite{Barrett} and a square smooth Newton method proposed by Liang et al. \cite{Liang}, which can greatly improve the training speed. The details of our algorithm is shown in Algorithm \ref{algo1}, where $J_{\hat{\mathcal{E}}}\left(\epsilon,C,\mu,w\right)$ is the Jacobian matrix of the smoothing system $\hat{\mathcal{E}}(\cdot)$ in (\ref{eq_E_hat}).
	
	\begin{algorithm}
		\caption{A squared smoothing Newton method (SN)}\label{algo1}
		\begin{algorithmic}[1]
			\Require $\hat{\epsilon}\in\left(0,\infty\right),\ r\in\left(0,1\right),\ \hat{\eta}\in\left(0,\infty\right)\ $st.$\delta:=\sqrt{2}\max\left\lbrace r\hat{\epsilon},\hat{\eta}\right\rbrace<1,
			\hat{r}\in\left(0,\infty\right),\ \rho\in\left(0,1\right),\ \sigma\in\left(0,\frac{1}{2}\right),\ \tau\in\left(0,1\right],\  \left(\epsilon^0,C^0,w^0,\mu^0\right)\in\mathbb{R}\times\mathbb{R}\times\mathbb{R}^{Tn}\times\mathbb{R}^{Tn}$.
			
			\For {$k\geq0$}
			
			\If{$\hat{\mathcal{E}}\left(\epsilon^k,C^k,\mu^k,w^k\right)=\mathbf{0}$}
			
			\State Output: $\left(\epsilon^k,C^k,\mu^k,w^k\right)$.
			
			\Else
			
			\State Compute $\eta_k:=\eta\left(\epsilon^k,C^k,\mu^k,w^k\right)$ and $\zeta_k:=\zeta\left(\epsilon^k,C^k,\mu^k,w^k\right)$,
			\Statex \qquad\quad 
			where
			\begin{center}
				\qquad\quad
				$\eta\left(\epsilon^k,C^k,\mu^k,w^k\right)=\min\left\lbrace 1, \hat{r}\|\hat{\mathcal{E}}\left(\epsilon,C,\mu,w\right)\|^\tau\right\rbrace,$\\
				\qquad\quad
				$\zeta\left(\epsilon^k,C^k,\mu^k,w^k\right)=r\min\left\lbrace 1,
				\|\hat{\mathcal{E}}\left(\epsilon,C,\mu,w\right)\|^{1+\tau}\right\rbrace .$
			\end{center}
			
			\State Solve the equation system by bi-conjugate gradient method (Bi-CG):
			\Statex
			\begin{equation}\label{eq_bicg}\qquad\quad
				\hat{\mathcal{E}}\left(\epsilon^k,C^k,\mu^k,w^k\right)+J_{\hat{\mathcal{E}}}\left(\epsilon,C,\mu,w\right)
				\left[ \begin{array}{ccc}\Delta\epsilon\\ \Delta C\\ \Delta\mu\\ \Delta w \end{array}\right]
				=\left[ \begin{array}{ccc}\zeta_k\hat{\epsilon}\\0\\ \mathbf{0}\\ \mathbf{0} \end{array}\right],
			\end{equation}
			\Statex \qquad\quad
			approximately such that
			\begin{center}
				\qquad\quad
				$\|\mathcal{R}_k\|\le\eta_k\|\mathcal{E}\left(\epsilon^k,C^k,\mu^k,w^k\right)+ \nabla_\epsilon\hat{\mathcal{E}} \left(\epsilon^k,C^k,\mu^k,w^k\right)\|,$\\
				\qquad\quad
				$\|\mathcal{R}_k\|\le\hat{\eta_k}\|\hat{\mathcal{E}}\left(\epsilon^k,C^k,\mu^k,w^k\right)\|,$
			\end{center}
			\Statex \qquad\quad where 
			\begin{equation}\label{eq_e}\qquad\quad
				\Delta\epsilon^k:=-\epsilon_k+\zeta_k\hat{\epsilon},
			\end{equation}
			\begin{center}\qquad\quad\qquad\quad
				$\mathcal{R}_k:=\mathcal{E}\left(\epsilon^k,C^k,\mu^k,w^k\right)+J_{\hat{\mathcal{E}}}\left(\epsilon,C,\mu,w\right)
				\left[ \begin{array}{ccc}\Delta\epsilon\\ \Delta C\\ \Delta\mu\\ \Delta w \end{array}\right].$
			\end{center}
			
			\State Compute $\ell_k$ as the smallest non-negative integer $\ell$ satisfying
			\begin{equation}\qquad\quad
				\begin{aligned}
					&\psi\left(\epsilon^k+\rho^\ell\Delta\epsilon^k,C^k+\rho^\ell\Delta C^k, \mu^k+\mu^\ell\Delta\mu^k,w^k+\rho^\ell\Delta w^k \right)\\
					&\le\left[1-2\sigma\left(1-\delta\right)\rho^\ell\right]\psi\left(\epsilon^k,C^k,\mu^k,w^k\right),
				\end{aligned}\label{eq_alg_end}
			\end{equation}
			\Statex \qquad\quad 
			where $\psi\left(\epsilon,C,\mu,w\right)=\|\hat{\mathcal{E}}\left(\epsilon,C,\mu,w\right)\|^2.$

			\State 
			Compute 
			\begin{equation*}\qquad\quad
					\left(\epsilon^{k+1},C^{k+1},\mu^{k+1},w^{k+1}\right)
					=\left(\epsilon^k,C^k,\mu^k,w^k\right)+\rho^{\ell_k}\left( \Delta\epsilon^k,\Delta C^k,\Delta\mu^k,\Delta w^k\right).
			\end{equation*}
			\EndIf
			\EndFor
		\end{algorithmic}
	\end{algorithm}
	
	In the numerical experiments, we can obtain the reduced form (\ref{eq_bicg_re}) by eliminating variable $\Delta\epsilon$ in equation (\ref{eq_bicg}) by (\ref{eq_e}), that is, we apply Bi-CG to solve the following system
	\begin{equation} \label{eq_bicg_re}
		\begin{aligned}
			&\left[ \begin{array}{ccc}
				1+\kappa\epsilon-h_2 & h_2{g(w)}^\top & h_2\mu^\top\nabla g(w)\\
				\nabla g(w)\mu & I_{Tn}+C\nabla g(w) & \nabla_w\left(\frac{1}{Tm_1}\nabla_f(w)+C\nabla g(w)\mu\right)\\
				g\left(w\right) & g\left(w\right) & I_{Tn}+C\nabla g(w)
			\end{array}	\right] 
			\left[ \begin{array}{c}
				\Delta C^k \\ \Delta \mu^k \\ \Delta w^k
			\end{array}\right] 	\\
			=&-
			\left[ \begin{array}{c}
				\left(1+\kappa\left|\epsilon\right|\right)C-h\left(\epsilon,C-\mu^\top g\left(w\right)\right)+(\kappa C-h_1)(\zeta_k\hat{\epsilon}-\epsilon^k)\\
				\frac{1}{Tm_1}\nabla f(w)+\left(I_{Tn}+C\nabla g(w)\right)\mu\\
				w+Cg\left(w\right)
			\end{array}\right] .
		\end{aligned}
	\end{equation}
	\vskip 2cm
	\subsection{Global convergence}\label{subsec4-3}
	In this part, we will investigate the convergence of Algorithm \ref{algo1} (SN). First, we prove that $J_{\hat{\mathcal{E}}}\left(\epsilon,C,\mu,w\right)$ is nonsingular under proper assumptions. Define $\nu(\mu,v,\epsilon)\in\mathbb{R}$ by
	\begin{equation*}
		\nu(\mu,v,\epsilon):=1+\kappa\epsilon-h_2 + h_2g \left(w\right) ^\top P(v)^{-1} \left( \alpha\left(\mu,v\right) P\left(v\right) ^{-1} g\left(w\right) - 2\nabla g\left(w\right) \mu\right).
	\end{equation*}
	\begin{lemma}\label{Lemma1} 
		$J_{\hat{\mathcal{E}}}\left(\epsilon,C,\mu,w\right)$ is nonsingular if $\epsilon>0$ and $z(\mu,v)>-\kappa\epsilon$, where $\kappa>0$ is a given constant.
	\end{lemma}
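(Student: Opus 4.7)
The plan is to show nonsingularity of $J_{\hat{\mathcal{E}}}$ by reducing to a scalar condition via block Gaussian elimination, then verifying that scalar is positive using the shape of the Huber function together with the hypothesis $z(\mu,v)>-\kappa\epsilon$.

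First I would write the Jacobian explicitly. Because the first component of $\hat{\mathcal{E}}$ is simply $\epsilon$, the first row of $J_{\hat{\mathcal{E}}}$ is $(1,0,0,0)$, so nonsingularity reduces to nonsingularity of the $(2Tn+1)\times(2Tn+1)$ block
\[
M \;=\;
\begin{bmatrix}
1+\kappa\epsilon - h_2 & h_2\,g(w)^\top & h_2\,\mu^\top\nabla g(w)\\[2pt]
\nabla g(w)\mu & P(v) & \alpha(\mu,v)\\[2pt]
g(w) & \mathbf{0} & P(v)
\end{bmatrix},
\]
which is exactly the matrix displayed in the reduced linear system (\ref{eq_bicg_re}) (with $I_{Tn}+C\nabla g(w)=P(v)$ from (\ref{eq_d_theta}) and $\alpha(\mu,v)$ from (\ref{eq_alpha})). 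Here $h_2$ denotes $\partial h/\partial t$ at $(\epsilon,C-\mu^\top g(w))$, and the chain-rule signs on the first block row come from differentiating $h(\epsilon,C-\mu^\top g(w))$ in $C,\mu,w$.

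Next I would perform block Gaussian elimination on $M$. Since $P(v)$ is symmetric positive definite for $C\ge 0$ (as established in the LICQ proof), I can solve the third block row $g(w)\Delta C+P(v)\Delta w=0$ to get $\Delta w=-P(v)^{-1}g(w)\Delta C$. Substituting into the second block row and using that $P(v)$ is invertible gives $\Delta\mu = P(v)^{-1}\!\left[\alpha(\mu,v)P(v)^{-1}g(w)-\nabla g(w)\mu\right]\Delta C$. Plugging both expressions into the first row, and using the symmetry of $\nabla g(w)=X\operatorname{Diag}(u(w))X^\top$ and of $P(v)^{-1}$ to identify $g(w)^\top P(v)^{-1}\nabla g(w)\mu = \mu^\top\nabla g(w)P(v)^{-1}g(w)$, the coefficient of $\Delta C$ collapses precisely to
\[
1+\kappa\epsilon-h_2 + h_2\,g(w)^\top P(v)^{-1}\!\left(\alpha(\mu,v)P(v)^{-1}g(w)-2\nabla g(w)\mu\right) \;=\; \nu(\mu,v,\epsilon).
\]
Hence $M$ is nonsingular iff $\nu(\mu,v,\epsilon)\neq 0$, and it suffices to prove $\nu>0$.

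Finally I would verify positivity of $\nu = (1-h_2)+\kappa\epsilon+h_2\,z(\mu,v)$ using the Huber function in (\ref{eq_huber}): a direct case analysis on $t=C-\mu^\top g(w)$ shows $h_2\in[0,1]$. If $h_2=0$, then $\nu = 1+\kappa\epsilon>0$ since $\epsilon>0$. If $h_2\in(0,1]$, the hypothesis $z(\mu,v)>-\kappa\epsilon$ yields $h_2\,z(\mu,v)>-h_2\kappa\epsilon$, so
\[
\nu \;>\; (1-h_2)+\kappa\epsilon-h_2\kappa\epsilon \;=\; (1-h_2)(1+\kappa\epsilon) \;\ge\; 0,
\]
again giving $\nu>0$. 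This yields the conclusion. The main obstacle is bookkeeping in the block elimination—in particular correctly combining the cross terms so that they telescope into $z(\mu,v)$ via the symmetry of $\nabla g(w)$ and $P(v)^{-1}$; once that step is done, the positivity argument is short.
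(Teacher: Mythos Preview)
Your proposal is correct and follows essentially the same route as the paper: reduce via the $(1,0,0,0)$ first row to the $3\times 3$ block, use invertibility of $P(v)$ to eliminate $\Delta w$ and then $\Delta\mu$, and collapse the first row to the scalar $\nu(\mu,v,\epsilon)=1+\kappa\epsilon+h_2(z(\mu,v)-1)$, which you then show is positive. Your case split on $h_2=0$ versus $h_2\in(0,1]$ is in fact slightly cleaner than the paper's argument, which simply asserts $h_2\in(0,1]$ for $\epsilon>0$ and bounds $h_2(z-1)>-1-\kappa\epsilon$ directly; the substance is the same.
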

	
	\begin{proof}
		From (\ref{eq_E_hat}), note that
		\begin{equation}\label{eq_E_hat'}
			\begin{aligned}
				J_{\hat{\mathcal{E}}}\left(\epsilon,C,\mu,w\right)
				& =\left[\begin{array}{cccc}
					1 & 0 & \mathbf{0}_{1\times Tn} & \mathbf{0}_{1\times Tn} \\
					\kappa C-h_1 & 1+\kappa\epsilon-h_2 & h_2{g(w)}^\top & h_2\mu^\top\nabla g(w) \\
					\mathbf{0}_{Tn} & \nabla g(w)\mu & I_{Tn}+C\nabla g(w) & \nabla _w\left(\frac{1}{Tm_1}\nabla f(w)+C\nabla g(w) \mu\right) \\
					\mathbf{0}_{Tn} & g\left(w\right) & \mathbf{0}_{Tn\times Tn} & I_{Tn}+C\nabla g(w)\end{array}\right]\\
				& =\left[\begin{array}{cccc}
					1 & 0 & \mathbf{0}_{1\times Tn} & \mathbf{0}_{1\times Tn} \\
					\kappa C-h_1 & 1+\kappa\epsilon-h_2 & h_2{g(w)}^\top & h_2\mu^\top\nabla g(w) \\
					\mathbf{0}_{Tn} & \nabla g(w)\mu & P(v) & \alpha(\mu,v) \\
					\mathbf{0}_{Tn} & g\left(w\right) & \mathbf{0}_{Tn\times Tn} & P(v)\end{array}\right],
			\end{aligned}
		\end{equation}
		where, $h_1=\nabla_{\epsilon} h\left(\epsilon,t\right),
		\ h_2=\nabla_t h\left(\epsilon,t\right)\in\left(0,1\right],\ \epsilon>0,\ t=C-\mu^\top g\left(w\right).$
		
		To show that $J_{\hat{\mathcal{E}}}\left(\epsilon,C,\mu,w\right)$ is nonsingular, it suffices to show that the following system of linear equations
		\begin{equation}\label{eq_eqE_hat_ori}
			J_{\hat{\mathcal{E}}}\left(\epsilon,C,\mu,w\right)
			\left[ \begin{array}{c}\Delta\epsilon\\ \Delta C\\ \Delta\mu\\ \Delta w \end{array}\right]=\mathbf{0}
		\end{equation}
		only admits zero solution. It is obvious that $\Delta\epsilon=0$. (\ref{eq_eqE_hat_ori}) reduces to the following equations
		\begin{equation}\label{eq_eqE_hat_re}
			\left[\begin{array}{ccc}
				1+\kappa\epsilon-h_2 & h_2{g(w)}^\top & h_2\mu^\top\nabla g(w) \\
				\nabla g(w)\mu & P(v) & \alpha(\mu,v) \\
				g\left(w\right) & \mathbf{0}_{Tn\times Tn} & P(v)
			\end{array}\right]
			\left[ \begin{array}{c} \Delta C\\ \Delta\mu\\ \Delta w \end{array}\right]=\mathbf{0}.
		\end{equation}
		
		We can solve the equation system (\ref{eq_eqE_hat_re}) as follows.
		First, note that $P(v)$ is positive definite, then
		\begin{equation}\label{eq_dw}
			\Delta w = -\Delta C P(v)^{-1} g(w).
		\end{equation}
		By eliminating $\Delta w$ with (\ref{eq_dw}), the equation system (\ref{eq_eqE_hat_re}) is reduced as
		\begin{equation}\label{eq_new0}
			\left[\begin{array}{cc}
				1+\kappa\epsilon-h_2 & h_2{g(w)}^\top \\
				\nabla g(w)\mu & P(v)  
			\end{array}\right]
			\left[ \begin{array}{c} \Delta C\\ \Delta\mu \end{array}\right]=
			\Delta C \left[ \begin{array}{c} 
				h_2\mu^\top\nabla g(w) \\ 
				\alpha(\mu,v) \end{array}\right]P(v)^{-1} g(w).
		\end{equation}
		which gives 
		\begin{equation}\label{eq_dmu}
			\Delta \mu = \Delta C P(v)^{-1} \left( \alpha\left(\mu,v\right) P\left(v\right) ^{-1} g\left(w\right) - \nabla g\left(w\right) \mu\right).
		\end{equation}
		Substituting $\Delta \mu$ into the first equation of (\ref{eq_new0}) gives
		\begin{equation*}
			\begin{aligned}
				& \left( 1+\kappa\epsilon-h_2 + h_2g \left(w\right) ^\top P(v)^{-1} \left( \alpha\left(\mu,v\right) P\left(v\right) ^{-1} g\left(w\right) - \nabla g\left(w\right) \mu\right) \right) \Delta C\\
				&= h_2\mu^\top\nabla g(w) P(v)^{-1} g(w) \Delta C,
			\end{aligned}
		\end{equation*}
		that is,
		\begin{equation}\label{eq_dC}
			\begin{aligned}
				&\left( 1+\kappa\epsilon-h_2 + h_2g \left(w\right) ^\top P(v)^{-1} \left( \alpha\left(\mu,v\right) P\left(v\right) ^{-1} g\left(w\right) - 2\nabla g\left(w\right) \mu\right) \right) \Delta C\\
				&= \left( 1+\kappa\epsilon + h_2\left(z(\mu,v)-1 \right) \right) \Delta C=\nu(\mu,v,\epsilon)\Delta C=0.
			\end{aligned}
		\end{equation}
		
		By $z(\mu,v)>-\kappa\epsilon$ and $h_2\in\left(0,1\right]$, we can get $h_2\left(z(\mu,v)-1 \right)>-1-\kappa\epsilon$, and then it holds that $\nu(\mu,v,\epsilon)\neq0$. The equation system (\ref{eq_dC}) only admits zero solution $\Delta C=0$, leading to $\Delta w=\Delta\mu=\mathbf{0}$ by (\ref{eq_dw}) and (\ref{eq_dmu}). Thus, (\ref{eq_eqE_hat_ori}) only admits zero solution. Therefore, $J_{\hat{\mathcal{E}}}\left(\epsilon,C,\mu,w\right)$ is nonsingular. The proof is completed.
	\end{proof}
	
	By Lemma 9, Lemma 10, Theorem 11 in \cite{Liang} and Lemma \ref{Lemma1} above, we can get the global convergence result as follows. 

	\begin{theorem}\label{thm_cov1}
		SN is a well-defined algorithm and generates an infinite sequence $\left\lbrace \left(\epsilon^k,C^k,\mu^k,w^k\right)\right\rbrace$. Any accumulation point  $\left(\bar{\epsilon},\bar{C},\bar{\mu},\bar{w}\right)$ of $\left\lbrace \left(\epsilon^k,C^k,\mu^k,w^k\right)\right\rbrace$ with $z(\bar{\mu},\bar{v})>0$ is a solution of $\hat{\mathcal{E}}\left(\epsilon,C,\mu,w\right)=\mathbf{0}.$ 
	\end{theorem}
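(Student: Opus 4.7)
The plan is to invoke the abstract convergence framework of \cite{Liang} and supply, via Lemma \ref{Lemma1}, the problem-specific nonsingularity of the Jacobian that the framework requires. The argument has three logical parts: well-definedness of each iteration, strict decrease of the merit function along the iterates, and identification of accumulation points as zeros of $\hat{\mathcal{E}}$.

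First, I would verify that SN is well-defined. This requires that at every iterate the Newton system (\ref{eq_bicg}) is solvable, which by Lemma \ref{Lemma1} holds as soon as $\epsilon^k>0$ and $z(\mu^k,v^k)>-\kappa\epsilon^k$. A standard induction on the $\epsilon$-update $\Delta\epsilon^k=-\epsilon^k+\zeta_k\hat{\epsilon}$ in (\ref{eq_e}), combined with the definition of $\zeta_k$, shows that $\epsilon^k\in(0,\hat{\epsilon}]$ is preserved for all $k$; this is precisely the content of Lemma 9 in \cite{Liang} applied to our $\hat{\mathcal{E}}$. Hence the first condition of Lemma \ref{Lemma1} is automatic along iterates, and the second holds as long as the sequence stays in the open region where $z(\mu,v)>-\kappa\epsilon$.

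Second, I would show the line-search (\ref{eq_alg_end}) terminates finitely. Since $\psi(\epsilon,C,\mu,w)=\|\hat{\mathcal{E}}(\epsilon,C,\mu,w)\|^2$ is continuously differentiable around any iterate with $\epsilon^k>0$, and the direction produced by (\ref{eq_bicg}) together with the inexactness control $\|\mathcal{R}_k\|\le\eta_k\|\cdot\|$ and $\|\mathcal{R}_k\|\le\hat{\eta}_k\|\cdot\|$ is a sufficient descent direction for $\psi$ (Lemma 10 of \cite{Liang}), Armijo backtracking yields a smallest $\ell_k$ in finitely many trials and $\{\psi(\epsilon^k,C^k,\mu^k,w^k)\}$ is strictly decreasing. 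Thus the algorithm either terminates finitely at a solution or produces an infinite sequence whose merit values converge monotonically.

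Third, I would pass to an accumulation point $(\bar{\epsilon},\bar{C},\bar{\mu},\bar{w})$ along some subsequence $\{k_j\}$. By continuity of $\psi$, the limit of $\{\psi(\epsilon^{k_j},C^{k_j},\mu^{k_j},w^{k_j})\}$ equals $\psi(\bar{\epsilon},\bar{C},\bar{\mu},\bar{w})$. Under the assumption $z(\bar{\mu},\bar{v})>0$ together with $\bar{\epsilon}\ge 0$, we have $z(\bar{\mu},\bar{v})>-\kappa\bar{\epsilon}$, so by continuity of $z$ in $(\mu,w,C)$ the same strict inequality holds on a tail of $\{k_j\}$; consequently Lemma \ref{Lemma1} ensures $J_{\hat{\mathcal{E}}}$ is nonsingular at $(\bar{\epsilon},\bar{C},\bar{\mu},\bar{w})$ (extended by continuity when $\bar{\epsilon}=0$). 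Theorem 11 of \cite{Liang} then applies verbatim: the merit-function decrease combined with nonsingularity at the limit rules out the possibility of $\hat{\mathcal{E}}(\bar{\epsilon},\bar{C},\bar{\mu},\bar{w})\ne \mathbf{0}$, because otherwise a further Newton step would strictly decrease $\psi$ by a definite amount, contradicting convergence of the subsequence. Hence $(\bar{\epsilon},\bar{C},\bar{\mu},\bar{w})$ solves $\hat{\mathcal{E}}=\mathbf{0}$.

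The main obstacle is the third step, specifically linking the \emph{pointwise} hypothesis $z(\bar{\mu},\bar{v})>0$ to a \emph{uniform} nonsingularity condition along the tail of the converging subsequence so that Theorem 11 of \cite{Liang} is applicable. This reduces to a continuity-plus-extraction argument for $z(\mu,v)$ along $\{k_j\}$, which is straightforward once one notes that $g(w)$, $\nabla g(w)$, $P(v)$ and $\alpha(\mu,v)$ are all continuous functions of $(C,w,\mu)$ and that $P(v)^{-1}$ exists on the whole feasible set by positive definiteness of $P(v)$ established in the proof of the LICQ proposition.
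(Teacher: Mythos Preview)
Your proposal is correct and follows essentially the same route as the paper: cite Lemmas~9 and~10 of \cite{Liang} for well-definedness and finite termination of the line search, use continuity of $z(\mu,v)$ together with Lemma~\ref{Lemma1} to obtain nonsingularity of $J_{\hat{\mathcal{E}}}$ along a tail of the subsequence near the accumulation point, and then invoke Theorem~11 of \cite{Liang} to conclude $\hat{\mathcal{E}}(\bar{\epsilon},\bar{C},\bar{\mu},\bar{w})=\mathbf{0}$. One minor tightening: the paper (and \cite{Liang}) establish well-definedness of the full sequence without assuming $z(\mu^k,v^k)>-\kappa\epsilon^k$ at every iterate---the inexact BiCG solve and Armijo backtracking do not require global nonsingularity---so your first paragraph slightly overstates what is needed; Lemma~\ref{Lemma1} is only invoked on a neighborhood of $(\bar{\epsilon},\bar{C},\bar{\mu},\bar{w})$, and only at iterates where $\epsilon^k>0$, not at the limit itself.
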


	\begin{proof}
		Lemma 9 and Lemma 10 in \cite{Liang} also hold in this paper since the termination condition of the SN is same as the algorithm proposed in \cite{Liang}, which implies SN is well-defined and generates an infinite sequence. 
		
		$z(\mu,v)$ is continuous, which guarantees that for any accumulation point  $\left(\bar{\epsilon},\bar{C},\bar{\mu},\bar{w}\right)$ with $z(\bar{\mu},\bar{v})>0$, there exists an open neighborhood $\mathcal{U}$ such that $z(\mu,v)>-\kappa\epsilon$ holds for any $(\mu,v)\in \mathcal{U}$, where $\kappa$ and $\epsilon$ are both positive. By Lemma \ref{Lemma1} above, $J_{\hat{\mathcal{E}}}\left(\epsilon,C,\mu,w\right)$ is nonsingular. Note that for $k$ sufficiently large, we have that $\left(\epsilon^k,C^k,\mu^k,w^k\right)$ belongs to $\mathcal{U}$ with $\epsilon^k>0$, leading to $J_{\hat{\mathcal{E}}}\left(\epsilon^k,C^k,\mu^k,w^k\right)$ is nonsingular.
		
		Following the proof in Theorem 11 in \cite{Liang}, we obtain that $\bar{\psi}=0$, which implies that $\hat{\mathcal{E}}\left(\bar{\epsilon},\bar{C},\bar{\mu},\bar{w}\right)=\mathbf{0}$. The proof is completed.
	\end{proof}

	Next, we verified that SN admits a superlinear convergence rate under a proper assumption, which guarantees that $\partial\hat{\mathcal{E}}\left(\epsilon^*,C^*,\mu^*,w^*\right)$ is nonsingular.

	\begin{proposition}\label{Proposition_partial}
		Let $\left(\epsilon^*,C^*,\mu^*,w^*\right)$ be such that $\hat{\mathcal{E}}\left(\epsilon,C,\mu,w\right)=\mathbf{0}$. If $z(\mu^*,v^*)>0$, then every element in $\partial\hat{\mathcal{E}}\left(\epsilon^*,C^*,\mu^*,w^*\right)$ is nonsingular.
	\end{proposition}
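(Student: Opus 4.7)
The plan is to imitate the block-elimination proof of Lemma \ref{Lemma1}, but applied to an arbitrary element $V \in \partial \hat{\mathcal{E}}(\epsilon^*,C^*,\mu^*,w^*)$ in place of the classical Jacobian. The first component of $\hat{\mathcal{E}}=\mathbf{0}$ forces $\epsilon^*=0$, so the only source of nonsmoothness is the Huber function $h(\epsilon,t)$ evaluated at $\epsilon=0$ with $t^*=C^*-\mu^{*\top}g(w^*)$. Every $V$ shares the block-sparsity pattern of $J_{\hat{\mathcal{E}}}$ in (\ref{eq_E_hat'}) with the scalars $h_1,h_2$ replaced by $\tilde h_1, \tilde h_2$ drawn from the Clarke subdifferential of $h$ at $(0,t^*)$; all other block entries are continuous in the primal variables, so they coincide with their values at $(C^*,\mu^*,w^*)$.

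Next I would run the same elimination cascade used in Lemma \ref{Lemma1} on the system $V[\Delta\epsilon;\Delta C;\Delta\mu;\Delta w]=\mathbf{0}$. The first row yields $\Delta\epsilon=0$; positive definiteness of $P(v^*)$ gives $\Delta w=-\Delta C\, P(v^*)^{-1}g(w^*)$ from the last block and
\[
\Delta\mu=\Delta C\,P(v^*)^{-1}\bigl(\alpha(\mu^*,v^*)P(v^*)^{-1}g(w^*)-\nabla g(w^*)\mu^*\bigr)
\]
from the middle block. Substituting into the second row (and using $\epsilon^*=0$ so the $\kappa\epsilon$ term vanishes) collapses everything to
\[
\bigl(1-\tilde h_2+\tilde h_2\, z(\mu^*,v^*)\bigr)\Delta C=0.
\]

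The key remaining task is to determine the admissible range of $\tilde h_2$ and verify the coefficient is strictly positive. The original complementarity $0\le\mu^{*\top}g(w^*)\perp C^*\ge 0$ yields three regimes. If $C^*>0$ and $\mu^{*\top}g(w^*)=0$ then $t^*>0$ and $h$ is $C^1$ in $t$ near $(0,t^*)$ with $\tilde h_2=1$, giving coefficient $z(\mu^*,v^*)>0$. If $C^*=0$ and $\mu^{*\top}g(w^*)>0$ then $t^*<0$ and $\tilde h_2=0$, giving coefficient $1$. In the degenerate case $C^*=0$ and $\mu^{*\top}g(w^*)=0$ we have $t^*=0$; inspecting the piecewise definition (\ref{eq_huber}) shows that the limits of $\partial_t h(\epsilon,t)$ through smooth points $(\epsilon,t)\to (0,0)$ with $\epsilon>0$ lie in $[0,1]$, and hence $\tilde h_2\in[0,1]$. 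Since $\tilde h_2\in[0,1]$ and $z(\mu^*,v^*)>0$, the two terms $(1-\tilde h_2)$ and $\tilde h_2 z(\mu^*,v^*)$ are both nonnegative and cannot vanish simultaneously, so the coefficient is positive. This forces $\Delta C=0$, and back-substitution gives $\Delta\mu=\mathbf{0}$ and $\Delta w=\mathbf{0}$, proving nonsingularity of $V$.

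The main obstacle will be the careful handling of the degenerate regime $t^*=0$: one has to justify that every $\tilde h_2$ arising from the Clarke subdifferential still lies in $[0,1]$, and in particular cannot equal $1$ unless $t^*>0$ (where $z>0$ is moot since the coefficient reduces to $z$ anyway). Outside of that bookkeeping, the argument is a direct transcription of Lemma \ref{Lemma1} using only the continuity of $g$, $\nabla g$, $P$, and $\alpha$ in $(C,\mu,w)$ together with the hypothesis $z(\mu^*,v^*)>0$.
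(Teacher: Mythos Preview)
Your proposal is correct and follows essentially the same approach as the paper's own proof: both set $\epsilon^*=0$, write an arbitrary element of $\partial\hat{\mathcal{E}}$ in the block form of (\ref{eq_E_hat'}) with $h_2$ replaced by some $h_2^*\in[0,1]$, and then repeat the elimination of Lemma~\ref{Lemma1} to reduce to the scalar equation $\bigl(1+h_2^*(z(\mu^*,v^*)-1)\bigr)\Delta C=0$, which is nonvanishing because $z>0$ and $h_2^*\in[0,1]$. The only difference is cosmetic: the paper simply asserts $h_2^*\in\partial_t h(0,t^*)=[0,1]$ in one line, whereas you split into the three complementarity regimes $t^*>0$, $t^*<0$, $t^*=0$ to arrive at the same conclusion; your extra case analysis is not needed but does no harm.
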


	\begin{proof} 
		Let $U\in\partial\hat{\mathcal{E}}\left(\epsilon^*,C^*,\mu^*,w^*\right)$. Since $\epsilon^*=0$, then 
		\begin{equation*}
			U\left[ \begin{array}{c}\Delta\epsilon\\ \Delta C\\ \Delta\mu\\ \Delta w \end{array}\right]=\left[\begin{array}{cccc}
				1 & 0 & \mathbf{0}_{1\times Tn} & \mathbf{0}_{1\times Tn} \\
				\kappa C-h_1^* & 1-h_2^* & h_2^*{g(w^*)}^\top & h_2^*\mu^{*^\top}\nabla g(w^*) \\
				\mathbf{0}_{Tn} & \nabla g(w^*)\mu^* & P(v^*) & \alpha(\mu^*,v^*) \\
				\mathbf{0}_{Tn} & g\left(w^*\right) & \mathbf{0}_{Tn\times Tn} & P(v^*)\end{array}\right]\left[ \begin{array}{c}\Delta\epsilon\\ \Delta C\\ \Delta\mu\\ \Delta w \end{array}\right],
		\end{equation*}
		where $t^*=C^*-\mu^{*^\top} g(w^*),\ h_2^*\in\partial_t h(0,t^*)= \left[0,1\right]$,
		\begin{equation*}\label{eq_huber_e}
			h_1^*\in\partial_\epsilon h(0,t^*) =
			\begin{cases}
				\left[-\frac{1}{2},\frac{1}{2}\right],\ t^*>0,\\
				0,\ t^*\le0.
			\end{cases}
		\end{equation*}
		Here, $\partial_t h(0,t^*)$ and $\partial_\epsilon h(0,t^*)$ denote the generalized gradient of $h(\epsilon,t)$ with respect to $t$ and $\epsilon$ at $(0,t^*)$ respectively.
		
		To show that $U$ is nonsingular, it suffices to show that the following system of linear equations
		\begin{equation}\label{eq_U}
			U\left[ \begin{array}{c}\Delta\epsilon\\ \Delta C\\ \Delta\mu\\ \Delta w \end{array}\right]=\mathbf{0}
		\end{equation}
		only admits zero solution. It is obvious that $\Delta\epsilon=0$. (\ref{eq_U}) reduces to the following equations
		\begin{equation}\label{eq_U_re}
			\left[\begin{array}{ccc}
				1-h_2^* & h_2^*{g(w^*)}^\top & h_2^*\mu^{*^\top}\nabla g(w^*) \\
				\nabla g(w^*)\mu^* & P(v^*) & \alpha(\mu^*,v^*) \\
				g\left(w^*\right) & \mathbf{0}_{Tn\times Tn} & P(v^*)
			\end{array}\right]
			\left[ \begin{array}{c} \Delta C\\ \Delta\mu\\ \Delta w \end{array}\right]=\mathbf{0}.
		\end{equation}
		Similar to the proof in Lemma \ref{Lemma1}, by (\ref{eq_dw}) and (\ref{eq_dmu}), the equation system (\ref{eq_U_re}) can be reduced as
		\begin{equation}\label{eq_h}
			\begin{aligned}
				&\left( 1-h_2^* + h_2^*g \left(w^*\right) ^\top P(v^*)^{-1} \left( \alpha\left(\mu^*,v^*\right) P\left(v^*\right) ^{-1} g\left(w^*\right) - 2\nabla g\left(w^*\right) \mu^*\right) \right) \Delta C\\
				&=(1+h_2^*(z(\mu^*,v^*)-1))\Delta C=0.
			\end{aligned}
		\end{equation}
		By $z(\mu^*,v^*)>0$ and $h_2^*\in\left[0,1\right]$, we can get $1+h_2^*(z(\mu^*,v^*)-1)\neq0$. The equation system (\ref{eq_h}) only admits zero solution $\Delta C=0$, leading to $\Delta\mu=\Delta w=\mathbf{0}$. Thus, (\ref{eq_U}) only admits zero solution. Hence, $U\in\partial\hat{\mathcal{E}}\left(\epsilon^*,C^*,\mu^*,w^*\right)$ is nonsingular.
	\end{proof}

	\begin{theorem}\label{thm_cov2}
		Let $\left(\bar{\epsilon},\bar{C},\bar{\mu},\bar{w}\right)$ be an accumulation point of the infinite sequence $\left\lbrace\left(\epsilon^k,C^k,\mu^k,w^k\right)\right\rbrace$ generated by SN. Suppose that $z(\bar{\mu},\bar{v})>0$. The whole sequence converges to $\left(\epsilon^*,C^*,\mu^*,w^*\right)$ superlinearly, i.e.,
		\begin{equation*}
			\begin{aligned}
				&\left\|\left(\epsilon^{k+1}-\epsilon^\ast,C^{k+1}-C^\ast, \mu^{k+1}-\mu^\ast,w^{k+1}-w^\ast\right)\right\| \\
				&=O\left( \left\|\left(\epsilon^k-\epsilon^\ast, C^k-C^\ast,\mu^k-\mu^\ast,w^k-w^\ast\right)\right\|^{1+\tau}\right),
			\end{aligned}
		\end{equation*}
		where $\tau\in\left(0,1\right]$ is a parameter of SN. Therefore, SN converges superlinearly to its accumulation point $\left(\epsilon^\ast,C^\ast,\mu^\ast,w^\ast\right).$
	\end{theorem}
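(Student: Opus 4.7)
The plan is to leverage the standard local convergence machinery for squared smoothing Newton methods developed in \cite{Liang} (particularly their Theorem 12), once the nonsingularity of the generalized Jacobian at the accumulation point has been established. The key pillar is Proposition \ref{Proposition_partial}: under the assumption $z(\bar\mu,\bar v)>0$, every element of $\partial\hat{\mathcal{E}}(\bar\epsilon,\bar C,\bar\mu,\bar w)$ is nonsingular. Combined with Theorem \ref{thm_cov1}, which guarantees that the accumulation point is already a solution of $\hat{\mathcal{E}}=\mathbf{0}$, this puts us exactly in the regime where local Newton-type analysis applies.

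First I would argue whole-sequence convergence. Since $\hat{\mathcal{E}}$ is locally Lipschitz and (strongly) semismooth---the Huber function (\ref{eq_huber}) is piecewise $C^1$ and hence strongly semismooth, and the smooth blocks of $\hat{\mathcal{E}}$ are $C^\infty$ in their arguments---nonsingularity of every element of $\partial\hat{\mathcal{E}}(\bar\epsilon,\bar C,\bar\mu,\bar w)$ together with $\hat{\mathcal{E}}(\bar\epsilon,\bar C,\bar\mu,\bar w)=\mathbf{0}$ implies that $\hat{\mathcal{E}}$ is BD-regular at this point. A standard consequence (Clarke's inverse function theorem applied to semismooth functions) is that $\hat{\mathcal{E}}$ is locally a homeomorphism near $(\bar\epsilon,\bar C,\bar\mu,\bar w)$, so this accumulation point is isolated as a zero. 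Since Theorem \ref{thm_cov1} already forced $\psi(\epsilon^k,C^k,\mu^k,w^k)\to 0$ (via the merit function decrease in (\ref{eq_alg_end})), every accumulation point solves $\hat{\mathcal{E}}=\mathbf{0}$; isolation then forces the whole sequence to converge to $(\epsilon^\ast,C^\ast,\mu^\ast,w^\ast):=(\bar\epsilon,\bar C,\bar\mu,\bar w)$.

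Next I would establish the local superlinear rate. Once $(\epsilon^k,C^k,\mu^k,w^k)$ is sufficiently close to the solution, BD-regularity guarantees that $J_{\hat{\mathcal{E}}}(\epsilon^k,C^k,\mu^k,w^k)$ (a smooth-side element of $\partial\hat{\mathcal{E}}$ since $\epsilon^k>0$) is nonsingular and its inverse is uniformly bounded. The inexact Newton equation (\ref{eq_bicg}) solved to the tolerance $\|\mathcal{R}_k\|\le\hat\eta_k\|\hat{\mathcal{E}}(\cdot)\|$ with $\hat\eta_k\to 0$, plus the driving term $\zeta_k\hat\epsilon=O(\|\hat{\mathcal{E}}\|^{1+\tau})$, produces a Newton step whose residual is of order $\|\hat{\mathcal{E}}\|^{1+\tau}$. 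Combining this with the strong semismoothness of $\hat{\mathcal{E}}$ yields the one-step error bound
\begin{equation*}
\|(\epsilon^{k+1},C^{k+1},\mu^{k+1},w^{k+1})-(\epsilon^\ast,C^\ast,\mu^\ast,w^\ast)\|=O\bigl(\|(\epsilon^k,C^k,\mu^k,w^k)-(\epsilon^\ast,C^\ast,\mu^\ast,w^\ast)\|^{1+\tau}\bigr),
\end{equation*}
provided the unit step $\ell_k=0$ is eventually accepted in (\ref{eq_alg_end}). Verifying the acceptance of the unit step is the last technical point.

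The main obstacle I expect is precisely this full-step acceptance issue: the line search condition (\ref{eq_alg_end}) must be satisfied with $\rho^{\ell_k}=1$ for all large $k$, otherwise the iteration degenerates to a damped version and the rate drops. This is handled exactly as in the proof of Theorem 12 of \cite{Liang}: one uses the semismooth estimate of $\hat{\mathcal{E}}$ combined with the inexact residual bound to show that $\psi(\epsilon^k+\Delta\epsilon^k,\ldots)\le (1-2\sigma(1-\delta))\psi(\epsilon^k,\ldots)$ holds eventually, since the superlinear model reduction dominates the Armijo-type decrease required by (\ref{eq_alg_end}). Once this is in place the recurrence above iterates cleanly and delivers the claimed superlinear rate of order $1+\tau$. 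Because all these ingredients---BD-regularity from Proposition \ref{Proposition_partial}, semismoothness of the Huber-based $\hat{\mathcal{E}}$, and the inexactness tolerances built into SN---mirror exactly the hypotheses of \cite[Theorem 12]{Liang}, the proof can be concluded by explicitly invoking that result.
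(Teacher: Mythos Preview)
Your proposal is correct and follows essentially the same route as the paper: invoke Theorem \ref{thm_cov1} to identify the accumulation point as a zero of $\hat{\mathcal{E}}$, use Proposition \ref{Proposition_partial} to obtain BD-regularity of $\partial\hat{\mathcal{E}}$ there, combine uniform boundedness of $J_{\hat{\mathcal{E}}}^{-1}$ with strong semismoothness of the Huber-based map and the inexactness tolerances $\zeta_k,\eta_k$ to get the $O(\|b^k-\bar b\|^{1+\tau})$ one-step bound, and finally verify eventual acceptance of the unit step. The only cosmetic differences are that the paper writes out the error decomposition $\|b^k+\Delta b^k-\bar b\|\le O(\|\hat{\mathcal{E}}_k-J_k(b^k-\bar b)\|)+O(\|\hat{\mathcal{E}}_k\|^{1+\tau})+O(\|\mathcal{R}_k\|)$ explicitly (citing Proposition~3.1 of \cite{Qi_b} and Theorem~14 of \cite{Liang}) rather than wrapping everything in a single citation, and it does not spell out the isolated-zero argument you give for whole-sequence convergence, since that follows once the local rate and unit-step acceptance are in hand.
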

	
	\begin{proof}
		By Theorem \ref{thm_cov1}, we see that $\hat{\mathcal{E}}\left(\bar{\epsilon},\bar{C},\bar{\mu},\bar{w}\right)=\mathbf{0}$ and in particular, $\bar{\epsilon}=0$.
		By Proposition \ref{Proposition_partial}, every element in $\partial{\hat{\mathcal{E}}}\left(0,\bar{C},\bar{\mu},\bar{w}\right)$ is nonsingular with $z(\bar{\mu},\bar{v})>0$. By Proposition 3.1 in \cite{Qi_b}, for all $k$ sufficiently large, it holds that 
		\begin{equation}
			\left\|J_{\hat{\mathcal{E}}}\left(\epsilon^k,C^k,\mu^k,w^k\right)^{-1}\right\|=O(1).
		\end{equation}
	
		Denote $\bar{b}:=\left(\bar{\epsilon},\bar{C},\bar{\mu},\bar{w}\right)^\top,\ b^k:=\left(\epsilon^k,C^k,\mu^k,w^k\right)^\top,\ \hat{\mathcal{E}}_*:=\hat{\mathcal{E}}\left(\bar{\epsilon},\bar{C},\bar{\mu},\bar{w}\right),\ \Delta b^k:=\left(\Delta \epsilon^k,\Delta C^k,\Delta \mu^k,\Delta w^k\right)^\top,\  \hat{\mathcal{E}}_k:=\hat{\mathcal{E}}\left(\epsilon^k, C^k,\mu^k,w^k\right)$ and $J_k=J_{\hat{\mathcal{E}}}\left(\epsilon^k,C^k,\mu^k,w^k\right)$ for $k\ge0$. Then, we can verify that 
		\begin{equation}\label{eq_con0}
			\begin{aligned}
				\left\|b^k+\Delta b^k-\bar{b}\right\|&=\left\|b^k+J_k^{-1}\left(\left(\begin{array}{c}\zeta_k \hat{\epsilon} \\ \mathcal{R}_k \end{array}\right) -\hat{\mathcal{E}}_k\right)\right\|\\
				&=\left\|-J_k^{-1}\left(\hat{\mathcal{E}}_k-J_k(b^k-\bar{b})-\left(\begin{array}{c} \zeta_k \hat{\epsilon} \\ \mathcal{R}_k \end{array}\right) \right)\right\|\\
				&=O\left(\left\|\hat{\mathcal{E}}_k- J_k(b^k-\bar{b})\right\|  \right) +O\left( \left\|\hat{\mathcal{E}}_k\right\|^{1+\tau}\right) +O\left( \left\| \mathcal{R}_k\right\| \right) 
			\end{aligned}
		\end{equation}
		Since $h(\epsilon,t)$ is strongly semismooth everywhere \cite{Liang},  $\hat{\mathcal{E}}$ is also strongly semismooth at $\bar{b}$. Thus, for $k$ sufficiently large, it holds that 
		\begin{equation}\label{eq_con1}
			\left\|\hat{\mathcal{E}}_k- J_k(b^k-\bar{b})\right\|=O\left( \left\| b^k-\bar{b}\right\|^2 \right) .
		\end{equation}
		Since $\hat{\mathcal{E}}$ is locally Lipschitz continuous at $\bar{b}$, for all $k$ sufficiently large, it holds that 
		\begin{equation}\label{eq_con2}
			\left\|\hat{\mathcal{E}}_k\right\|=\left\|\hat{\mathcal{E}}_k-\hat{\mathcal{E}}_*\right\|=O\left(\left\| b^k-\bar{b}\right\|  \right) .
		\end{equation}
		By Theorem 14 in \cite{Liang}, we can see that 
		\begin{equation*}
			\left\|\mathcal{R} \right\|\le O\left( \left\| b^k-\bar{b}\right\|^{1+\tau} \right) ,\ \tau\in(0,1]
		\end{equation*}
		which together with (\ref{eq_con0}),(\ref{eq_con1}) and (\ref{eq_con2}) implies that 
		\begin{equation*}
			\left\|b^k+\Delta b^k-\bar{b}\right\|= O\left( \left\| b^k-\bar{b}\right\|^{1+\tau} \right) ,\ \tau\in(0,1].
		\end{equation*}
	
		Therefore, we get for $k$ sufficiently large that
		\begin{equation*}
			\begin{aligned}
				\psi(b^k+\Delta b^k)&=\left\| \hat{\mathcal{E}}(b^k+\Delta b^k) \right\|^2
				=O\left(\left\| b^k+\Delta b^k-\bar{b}\right\| ^2 \right) 
				=O\left(\left\|b^k-\bar{b}\right\|^{2\left(1+\tau \right)}\right)\\
				&=O\left(\left\|\hat{\mathcal{E}}_k\right\|^{2\left(1+\tau \right) }  \right)
				=O\left(\left\|\psi(b^k)\right\|^{1+\tau}  \right)
				=o\left(\left\|\psi(b^k)\right\| \right),\ \tau\in(0,1]
			\end{aligned}
		\end{equation*}
		which shows that, for $k$ sufficient large, $b^{k+1}=b^k+\Delta b^{k}$, i.e., the unit step is eventually accepted with a given constant $\tau\in(0,1]$. The proof is completed.		
	\end{proof}
	
	\section{Numerical Results}\label{sec5}
	In this section, we present the cross-validation algorithm for selecting the hyperparameter $C$ in SVC, as shown in Algorithm 2. All the numerical tests are conducted in Matlab R2018b. All the data sets are collected from the LIBSVM library: https://www.csie.ntu.edu.tw/cjlin/
	libsvmtools/datasets/. The data descriptions are shown in Table 1.
	\vskip 4mm
	\begin{algorithm}
		\caption{The cross-validation algorithm (SN-CV)}\label{algo2}
		\begin{algorithmic}[1]
			\vskip 1mm
			\State 
			Given $T$, split the data set into a subset $\Omega$ with $l_1$ points and a hold-out test set $\Theta$ with $l_2$ points. The set $\Omega$ is equally partitioned into $T$ pairwise disjoint subsets, one for each fold.
			\vskip 1mm
			\State 
			$\textbf{Select}$ an optimal hyperparameter $C$ by SN in Algorithm 1.
			\vskip 1mm
			\State
			$\textbf{Post-processing procedure.}$ The regularization hyperparameter $\hat{C}$ is rescaled by a factor $T/(T-1)$. This gives the final classifier $\hat{w}$.
			\vskip 1mm
		\end{algorithmic}
	\end{algorithm}
	
	\begin{table}[htb]\label{tab_Descriptions}
		\renewcommand{\arraystretch}{1.15}
		\setlength{\abovecaptionskip}{0cm}
		\caption{Descriptions of data sets}
		\begin{center}
			\begin{tabular}{ c p{6.5em} r r r p{0.8em} c p{4em} r r r } 
				\specialrule{0.1em}{0pt}{0pt}
				No. & Data set & $l_1\ $ & $l_2\ $ & $n\ $ & &
				No. & Data set & $l_1\ $ & $l_2\ $ & $n\ $ \\ 
				\specialrule{0.1em}{0pt}{0pt}
				1 & fourclass & 735 & 127 & 2 & & 11 & a2a & 1800 & 465 & 119 \\
				2 & diabetes & 540 & 228 & 8 & & 12 &  a3a & 1200 & 985 & 122 \\ 
				3 & breast-cancer & 540 & 143 & 10 & & 13 & a4a & 1200 & 581 & 122 \\
				4 & heart & 162 & 108 & 13 & & 14 & a6a & 2250 & 8970 & 122 \\
				5 & australian & 600 & 90 & 14 & & 15 & a7a & 2700 & 13400 & 122 \\
				6 & svmguide4 & 30 & 11 & 22 & & 16 & a9a & 3300 & 29261 & 123 \\
				7 & german.number & 600 & 400 & 24 & & 17 & w1a & 1500 & 977 & 300 \\
				8 &ionosphere & 243 & 108 & 34 & & 18& w2a & 1800 & 1670 & 300 \\
				9 &sonar & 150 & 58 & 60 & & 19 & w4a & 3000 & 4366 & 300 \\
				10 & phishing & 1500 & 500 & 68 & & 20 & w5a & 3000 & 6888 & 300 \\
				\specialrule{0.1em}{0pt}{0pt}
			\end{tabular}
		\end{center}
	\end{table}
	
	Our method is denoted as SN. We use imSN (exSN) to denote the Jacobian matrix in (\ref{eq_bicg}) is formed implicitly (explicitly). We compare our methods  with three other approaches: SNO-CV, where the optimal hyperparameter $C$ is selected by the SNOPT solver \cite{Gill}; GS-UNC and GS-SNO, in both of which the hyperparameter $C$ is searched in a grid range \cite{Couellan,Kunapuli_a}
	\begin{center}
		$C\in \left\lbrace \right.  \left. 0.5\times{10}^{-4},{10}^{-4}, 0.5\times{10}^{-3},{10}^{-3}, 0.5\times{10}^{-2},{10}^{-2}, 0.5\times{10}^{-1},{10}^{-1},\right.$\\
		$\quad\ \ \left. 0.5, 1, 0.5\times{10}^1, {10}^1, 0.5\times{10}^2, {10}^2, 0.5\times{10}^3, {10}^3, 0.5\times{10}^4, {10}^4 \  \right\rbrace ,$
	\end{center}
	and the lower-level problems defined as (\ref{eq_lower_ori}) are solved using the given $C$. In GS-UNC, the optimization problem is solved by the MATLAB function $fminunc$, while in GS-SNO, the same problem is solved by SNOPT solver.
	
	The parameters of three methods are set as follows. 
	For imSN and exSN, we set the initial values as $\ \epsilon^0=1,\ C^0=1,\ \mu^0=0,\ w^0=0,\ r=\hat{r}=0.6,\ \hat{\eta}=0.2,\ \rho=0.5,\ \sigma=1\times{10}^{-8},\ \tau=0.2,\ \hat{\epsilon}=0.5, \ \kappa=1$.
	For SNO-CV, we set $\ C^0=1,\ \mu^0=\mathbf{0},\ w^0=\mathbf{0}$. 
	\vskip 5mm
	We compare the aforementioned methods in the following three aspects:\\
	1. Training time ($t$), which is the time of classification training with the CV set.\\
	2. Test error ($E_t\left(\%\right) $) as defined by 
	\begin{center}
		$E_t=\frac{1}{l_2}\sum \limits_{x,y\in\Theta} \frac{1}{2} \left| \ {\rm sign}\left( w^{*^\top} x \right) -y \ \right| ,$
	\end{center}
	where $\Theta$ is the test set.\\
	3. Cross validation error ($E_{CV}$) as defined in the objective function of problem  (\ref{eq_NLP}).
	
	The results are reported in Table \ref{tab_Results}, where we mark the winners in bold and they are also illustrated in Figures 1-3. We have the following observations. 
	
	Firstly, for training time summarized also in Figure \ref{Fig. t}, our SN methods take the least time among the five methods in almost all data sets. Especially for the datasets with a large number of features (No.10 to No.20), the training time of the two SN methods are much less than that of other remaining methods. Besides, we can find that there is a structural difference caused by the number of sample feature. The training time of exSN is slightly less than that of imSN when features are less than about 60, but imSN is more efficient in large datasets with features more than 60. This reflects the advantages of implicit methods in large-scale program design.
	Secondly, the methods based on bilevel optimization (i.e., imSN, exSN, SNO-CV) performs better than the two grid searching methods (i.e., GS-UNC, GS-SNO) in terms of test error ($E_t(\%)$), implying that our bilevel optimization approaches are more capable of classification prediction. 
	Thirdly, in terms of cross validation error ($E_{CV}$), SNO-CV is the best, and the two SN methods are very close to SNO-CV in most data sets. 
	Finally, Figure \ref{Fig. Vio} demonstrates the typical decrease of $\|\mathcal{\hat{E}}\|$ for some datasets, where one can indeed observe the superlinear convergence rate of our algorithm.
	
	Overall, the two SN methods are able to achieve satisfactory results, which are competitive with SNO-CV, and far superior to two GS methods. Meanwhile, the training time can be reduced a lot comparing with SNO-CV which is a widely used bilevel optimization method. 
	
	\begin{figure}[htbp]
		\centering
		\begin{minipage}{1\linewidth}
			\centering
			\includegraphics[width=1\textwidth]{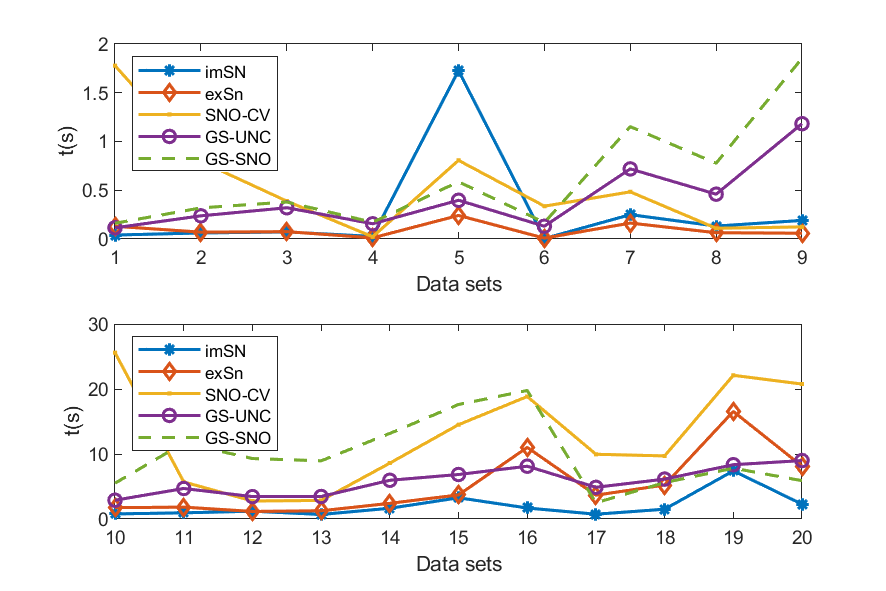}
			\setlength{\abovecaptionskip}{-0.5cm}
			\caption{The comparison among the three methods on training time ($t$)} 
			\label{Fig. t} 
		\end{minipage}
		\vskip 0.5cm
		\begin{minipage}{1\linewidth}
			\centering
			\includegraphics[width=1\textwidth]{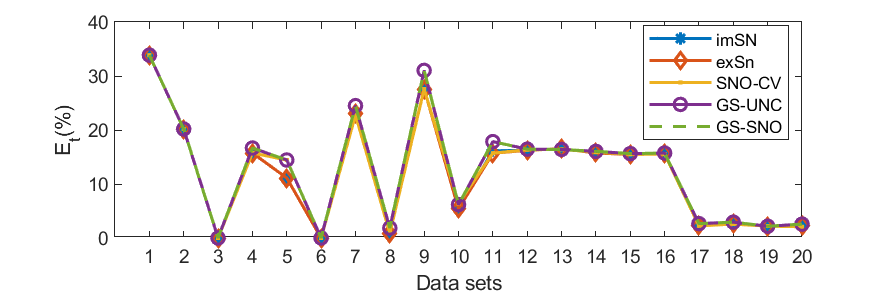}
			\setlength{\abovecaptionskip}{-0.3cm}
			\caption{The comparison among the three methods on test error ($E_t\left(\%\right) $)} 
			\label{Fig. e} 
		\end{minipage}
		\vskip 0.5cm
		\begin{minipage}{1\linewidth}
			\centering
			\includegraphics[width=1\textwidth]{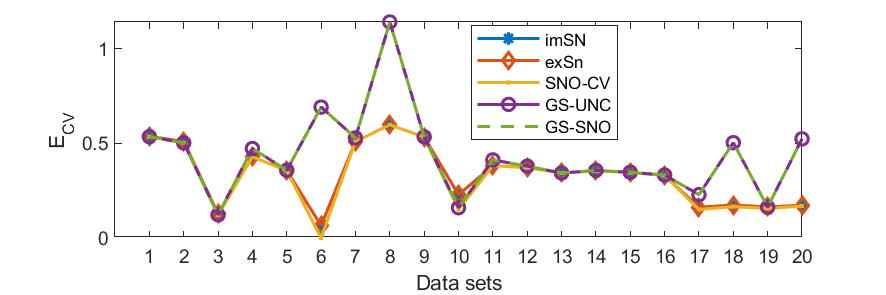}
			\setlength{\abovecaptionskip}{-0.3cm}
			\caption{The comparison among the three methods on cross validation error ($E_{CV}$)} 
			\label{Fig. c} 
		\end{minipage}
		\vskip 0.5cm
	\end{figure}
	
	\begin{center}
		\begin{longtable}{ c l c l r r r r}			
			\caption{Computational results for $T$ = 3} \label{tab_Results} \\
			\specialrule{0.1em}{0pt}{0pt}
			\multicolumn{1}{c}{No.} & 
			\multicolumn{1}{l}{Data set} & 
			\multicolumn{1}{c}{(m,n,$\gamma$)} &
			\multicolumn{1}{l}{Method} &
			\multicolumn{1}{c}{$C$} &
			\multicolumn{1}{c}{$t$} & 
			\multicolumn{1}{c}{$E_t\left(\%\right)$} & 
			\multicolumn{1}{c}{$E_{CV}$} \\ \specialrule{0.1em}{0pt}{0pt}
			\endfirsthead
			
			\specialrule{0em}{15pt}{2pt}
			\multicolumn{8}{c}
			{{\bfseries \tablename\ \thetable{} -- continued from previous page}} \\
			\specialrule{0.1em}{0pt}{0pt}
			\multicolumn{1}{c}{No.} & 
			\multicolumn{1}{l}{Data set} &
			\multicolumn{1}{c}{(m,n,$\gamma$)} &
			\multicolumn{1}{l}{Method} &
			\multicolumn{1}{c}{$C$} &
			\multicolumn{1}{c}{$t$} & 
			\multicolumn{1}{c}{$E_t\left(\%\right)$} & 
			\multicolumn{1}{c}{$E_{CV}$} \\ \specialrule{0.1em}{0pt}{0pt}
			\endhead
			
			\hline \multicolumn{8}{r}{Continued on next page} \\ 
			\endfoot
			
			\hline \hline
			\endlastfoot
			
			1	&	fourclass	&	$(	7	,	6	,	14	)$	&	imSN	&	0.468	&	$\mathbf{	0.041 	}$	&	$\mathbf{	33.858 	}$	&		0.5373 		\\	
			&		&								&	exSN	&	0.468	&		0.130 		&	$\mathbf{	33.858 	}$	&		0.5373 		\\	
			&		&								&	SNO-CV	&	3.004	&		1.777 		&	$\mathbf{	33.858 	}$	&	$\mathbf{	0.5354 	}$	\\	
			&		&								&	GS-UNC	&	1.5	&		0.115 		&	$\mathbf{	33.858 	}$	&		0.5355 		\\	
			&		&								&	GS-SNO	&	1.5	&		0.161 		&	$\mathbf{	33.858 	}$	&		0.5355 		\\	\hline
			2	&	diabetes	&	$(	25	,	24	,	50	)$	&	imSN	&	0.800	&	$\mathbf{	0.062 	}$	&	$\mathbf{	20.175 	}$	&		0.5094 		\\	
			&		&								&	exSN	&	0.800	&		0.070 		&	$\mathbf{	20.175 	}$	&		0.5094 		\\	
			&		&								&	SNO-CV	&	4.688	&		0.795 		&	$\mathbf{	20.175 	}$	&	$\mathbf{	0.5025 	}$	\\	
			&		&								&	GS-UNC	&	7.5	&		0.238 		&	$\mathbf{	20.175 	}$	&		0.5026 		\\	
			&		&								&	GS-SNO	&	7.5	&		0.319 		&	$\mathbf{	20.175 	}$	&		0.5026 		\\	\hline
			3	&	breast-	&	$(	31	,	30	,	62	)$	&	imSN	&	0.693	&	$\mathbf{	0.072 	}$	&	$\mathbf{	0.000 	}$	&		0.1284 		\\	
			&	cancer	&								&	exSN	&	0.693	&		0.075 		&	$\mathbf{	0.000 	}$	&		0.1284 		\\	
			&		&								&	SNO-CV	&	3.528	&		0.389 		&	$\mathbf{	0.000 	}$	&	$\mathbf{	0.1217 	}$	\\	
			&		&								&	GS-UNC	&	7.5	&		0.321 		&	$\mathbf{	0.000 	}$	&		0.1225 		\\	
			&		&								&	GS-SNO	&	7.5	&		0.379 		&	$\mathbf{	0.000 	}$	&		0.1225 		\\	\hline
			4	&	heart	&	$(	40	,	39	,	80	)$	&	imSN	&	0.483	&		0.026 		&	$\mathbf{	15.741 	}$	&	$\mathbf{	0.4291 	}$	\\	
			&		&								&	exSN	&	0.483	&	$\mathbf{	0.012 	}$	&	$\mathbf{	15.741 	}$	&	$\mathbf{	0.4291 	}$	\\	
			&		&								&	SNO-CV	&	0.499	&		0.023 		&	$\mathbf{	15.741 	}$	&	$\mathbf{	0.4291 	}$	\\	
			&		&								&	GS-UNC	&	0.075	&		0.156 		&		16.667 		&		0.4727 		\\	
			&		&								&	GS-SNO	&	0.075	&		0.172 		&		16.667 		&		0.4727 		\\	\hline
			5	&	australian	&	$(	43	,	42	,	86	)$	&	imSN	&	0.328	&		1.727 		&	$\mathbf{	11.111 	}$	&		0.3568 		\\	
			&		&								&	exSN	&	0.400	&	$\mathbf{	0.242 	}$	&	$\mathbf{	11.111 	}$	&		0.3563 		\\	
			&		&								&	SNO-CV	&	1.287	&		0.807 		&		14.444 		&	$\mathbf{	0.3552 	}$	\\	
			&		&								&	GS-UNC	&	7.5	&		0.397 		&		14.444 		&		0.3578 		\\	
			&		&								&	GS-SNO	&	7.5	&		0.587 		&		14.444 		&		0.3578 		\\	\hline
			6	&	svmguide4	&	$(	67	,	66	,	134	)$	&	imSN	&	1.288	&	$\mathbf{	0.005 	}$	&	$\mathbf{	0.000 	}$	&		0.0653 		\\	
			&		&								&	exSN	&	1.288	&		0.009 		&	$\mathbf{	0.000 	}$	&		0.0653 		\\	
			&		&								&	SNO-CV	&	49509	&		0.337 		&	$\mathbf{	0.000 	}$	&	$\mathbf{	0.0000 	}$	\\	
			&		&								&	GS-UNC	&	0.00015	&		0.131 		&	$\mathbf{	0.000 	}$	&		0.6918 		\\	
			&		&								&	GS-SNO	&	0.00015	&		0.169 		&	$\mathbf{	0.000 	}$	&		0.6918 		\\	\hline
			7	&	german.	&	$(	73	,	72	,	146	)$	&	imSN	&	0.226	&		0.250 		&		23.000 		&		0.5094 		\\	
			&	number	&								&	exSN	&	0.246	&	$\mathbf{	0.164 	}$	&		23.000 		&		0.5095 		\\	
			&		&								&	SNO-CV	&	0.196	&		0.484 		&	$\mathbf{	22.750 	}$	&	$\mathbf{	0.5092 	}$	\\	
			&		&								&	GS-UNC	&	1500	&		0.719 		&		24.500 		&		0.5292 		\\	
			&		&								&	GS-SNO	&	1500	&		1.152 		&		24.500 		&		0.5292 		\\	\hline
			8	&	ionosphere	&	$(	103	,	102	,	206	)$	&	imSN	&	0.135	&		0.133 		&	$\mathbf{	0.926 	}$	&	$\mathbf{	0.5980 	}$	\\	
			&		&								&	exSN	&	0.135	&	$\mathbf{	0.064 	}$	&	$\mathbf{	0.926 	}$	&	$\mathbf{	0.5980 	}$	\\	
			&		&								&	SNO-CV	&	0.123	&		0.110 		&	$\mathbf{	0.926 	}$	&	$\mathbf{	0.5980 	}$	\\	
			&		&								&	GS-UNC	&	15	&		0.459 		&		1.852 		&		1.1415 		\\	
			&		&								&	GS-SNO	&	15	&		0.777 		&		1.852 		&		1.1415 		\\	\hline
			9	&	sonar	&	$(	181	,	180	,	362	)$	&	imSN	&	0.566	&		0.191 		&	$\mathbf{	27.586 	}$	&	$\mathbf{	0.5333 	}$	\\	
			&		&								&	exSN	&	0.566	&	$\mathbf{	0.059 	}$	&	$\mathbf{	27.586 	}$	&	$\mathbf{	0.5333 	}$	\\	
			&		&								&	SNO-CV	&	0.572	&		0.124 		&	$\mathbf{	27.586 	}$	&	$\mathbf{	0.5333 	}$	\\	
			&		&								&	GS-UNC	&	0.75	&		1.183 		&		31.034 		&		0.5350 		\\	
			&		&								&	GS-SNO	&	0.75	&		1.853 		&		31.034 		&		0.5350 		\\	\hline
			10	&	phishing	&	$(	205	,	204	,	410	)$	&	imSN	&	1.632	&	$\mathbf{	0.781 	}$	&	$\mathbf{	5.400 	}$	&		0.2289 		\\	
			&		&								&	exSN	&	1.632	&		1.748 		&	$\mathbf{	5.400 	}$	&		0.2289 		\\	
			&		&								&	SNO-CV	&	96.131	&		25.584 		&		6.400 		&	$\mathbf{	0.1505 	}$	\\	
			&		&								&	GS-UNC	&	15	&		2.890 		&		6.200 		&		0.1600 		\\	
			&		&								&	GS-SNO	&	15	&		5.511 		&		6.200 		&		0.1600 		\\	\hline
			11	&	a2a	&	$(	358	,	357	,	716	)$	&	imSN	&	0.254	&	$\mathbf{	0.949 	}$	&		16.129 		&		0.3824 		\\	
			&		&								&	exSN	&	0.284	&		1.825 		&	$\mathbf{	15.699 	}$	&	$\mathbf{	0.3823 	}$	\\	
			&		&								&	SNO-CV	&	0.303	&		5.704 		&	$\mathbf{	15.699 	}$	&	$\mathbf{	0.3823 	}$	\\	
			&		&								&	GS-UNC	&	75	&		4.693 		&		17.849 		&		0.4126 		\\	
			&		&								&	GS-SNO	&	75	&		11.684 		&		17.849 		&		0.4126 		\\	\hline
			12	&	a3a	&	$(	367	,	366	,	734	)$	&	imSN	&	0.414	&		1.200 		&	$\mathbf{	16.244 	}$	&		0.3722 		\\	
			&		&								&	exSN	&	0.347	&	$\mathbf{	1.161 	}$	&	$\mathbf{	16.244 	}$	&		0.3727 		\\	
			&		&								&	SNO-CV	&	0.611	&		2.774 		&		16.345 		&	$\mathbf{	0.3717 	}$	\\	
			&		&								&	GS-UNC	&	0.15	&		3.445 		&		16.447 		&		0.3795 		\\	
			&		&								&	GS-SNO	&	0.15	&		9.300 		&		16.447 		&		0.3795 		\\	\hline
			13	&	a4a	&	$(	367	,	366	,	734	)$	&	imSN	&	0.458	&	$\mathbf{	0.700 	}$	&		16.532 		&		0.3443 		\\	
			&		&								&	exSN	&	0.460	&		1.265 		&		16.532 		&		0.3443 		\\	
			&		&								&	SNO-CV	&	0.747	&		2.849 		&	$\mathbf{	16.392 	}$	&	$\mathbf{	0.3433 	}$	\\	
			&		&								&	GS-UNC	&	0.75	&		3.464 		&	$\mathbf{	16.392 	}$	&	$\mathbf{	0.3433 	}$	\\	
			&		&								&	GS-SNO	&	0.75	&		8.952 		&	$\mathbf{	16.392 	}$	&	$\mathbf{	0.3433 	}$	\\	\hline
			14	&	a6a	&	$(	367	,	366	,	734	)$	&	imSN	&	0.300	&	$\mathbf{	1.651 	}$	&	$\mathbf{	15.753 	}$	&		0.3560 		\\	
			&		&								&	exSN	&	0.364	&		2.412 		&	$\mathbf{	15.753 	}$	&		0.3554 		\\	
			&		&								&	SNO-CV	&	0.544	&		8.600 		&		15.909 		&	$\mathbf{	0.3550 	}$	\\	
			&		&								&	GS-UNC	&	1.5	&		5.956 		&		16.042 		&		0.3571 		\\	
			&		&								&	GS-SNO	&	1.5	&		13.167 		&		16.054 		&		0.3571 		\\	\hline
			15	&	a7a	&	$(	367	,	366	,	734	)$	&	imSN	&	0.388	&	$\mathbf{	3.263 	}$	&		15.507 		&		0.3466 		\\	
			&		&								&	exSN	&	0.342	&		3.705 		&	$\mathbf{	15.463 	}$	&		0.3469 		\\	
			&		&								&	SNO-CV	&	0.649	&		14.512 		&		15.493 		&		0.3460 		\\	
			&		&								&	GS-UNC	&	1.5	&		6.846 		&		15.627 		&		0.3472 		\\	
			&		&								&	GS-SNO	&	1.5	&		17.617 		&		15.627 		&		0.3472 		\\	\hline
			16	&	a9a	&	$(	370	,	369	,	740	)$	&	imSN	&	0.269	&	$\mathbf{	1.693 	}$	&		15.587 		&		0.3302 		\\	
			&		&								&	exSN	&	0.278	&		11.014 		&		15.584 		&		0.3301 		\\	
			&		&								&	SNO-CV	&	0.359	&		18.841 		&	$\mathbf{	15.546 	}$	&	$\mathbf{	0.3299 	}$	\\	
			&		&								&	GS-UNC	&	0.15	&		8.121 		&		15.731 		&		0.3327 		\\	
			&		&								&	GS-SNO	&	0.15	&		19.773 		&		15.731 		&		0.3327 		\\	\hline
			17	&	w1a	&	$(	901	,	900	,	1802	)$	&	imSN	&	0.857	&	$\mathbf{	0.737 	}$	&		2.354 		&		0.1625 		\\	
			&		&								&	exSN	&	0.857	&		3.667 		&		2.354 		&		0.1625 		\\	
			&		&								&	SNO-CV	&	4.153	&		9.959 		&	$\mathbf{	2.252 	}$	&	$\mathbf{	0.1506 	}$	\\	
			&		&								&	GS-UNC	&	0.075	&		4.877 		&		2.661 		&		0.2290 		\\	
			&		&								&	GS-SNO	&	0.075	&		2.494 		&		2.661 		&		0.2290 		\\	\hline
			18	&	w2a	&	$(	901	,	900	,	1802	)$	&	imSN	&	0.679	&	$\mathbf{	1.506 	}$	&		2.635 		&		0.1741 		\\	
			&		&								&	exSN	&	0.679	&		5.229 		&		2.635 		&		0.1741 		\\	
			&		&								&	SNO-CV	&	2.602	&		9.706 		&	$\mathbf{	2.575 	}$	&	$\mathbf{	0.1655 	}$	\\	
			&		&								&	GS-UNC	&	0.0015	&		6.137 		&		2.934 		&		0.5036 		\\	
			&		&								&	GS-SNO	&	0.0015	&		5.602 		&		2.934 		&		0.5036 		\\	\hline
			19	&	w4a	&	$(	901	,	900	,	1802	)$	&	imSN	&	0.695	&	$\mathbf{	7.408 	}$	&	$\mathbf{	2.176 	}$	&		0.1635 		\\	
			&		&								&	exSN	&	0.667	&		16.549 		&	$\mathbf{	2.176 	}$	&		0.1640 		\\	
			&		&								&	SNO-CV	&	2.303	&		22.107 		&		2.199 		&	$\mathbf{	0.1564 	}$	\\	
			&		&								&	GS-UNC	&	0.75	&		8.352 		&	$\mathbf{	2.176 	}$	&		0.1627 		\\	
			&		&								&	GS-SNO	&	0.75	&		7.810 		&		2.176 		&		0.1627 		\\	\hline
			20	&	w5a	&	$(	901	,	900	,	1802	)$	&	imSN	&	0.626	&	$\mathbf{	2.281 	}$	&		2.236 		&		0.1729 		\\	
			&		&								&	exSN	&	0.626	&		8.128 		&		2.236 		&		0.1729 		\\	
			&		&								&	SNO-CV	&	1.685	&		20.758 		&	$\mathbf{	2.076 	}$	&	$\mathbf{	0.1685 	}$	\\	
			&		&								&	GS-UNC	&	0.00075	&		8.990 		&		2.570 		&		0.5239 		\\	
			&		&								&	GS-SNO	&	0.00075	&		5.899 		&		2.570 		&		0.5239 		\\	\hline
			
		\end{longtable}
	\end{center}
	
	\begin{figure}[htbp]
		\centering
		\setlength{\abovecaptionskip}{0.1cm}
		\begin{minipage}{0.49\linewidth}
			\centering
			\includegraphics[width=1.05\linewidth]{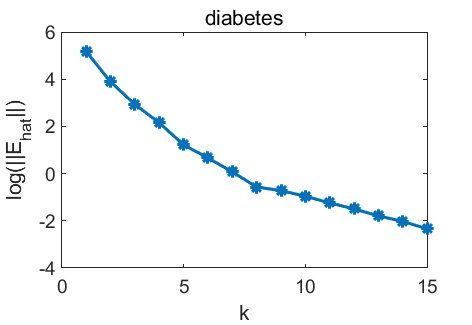}
		\end{minipage}
		\begin{minipage}{0.49\linewidth}
			\centering
			\includegraphics[width=1.05\linewidth]{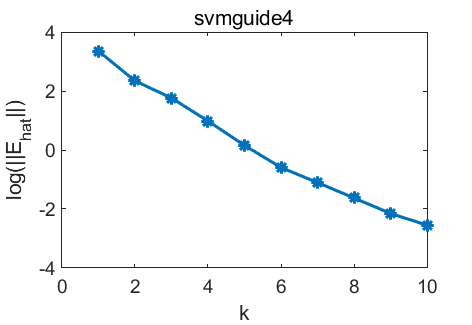}
		\end{minipage}
		
		\begin{minipage}{0.49\linewidth}
			\centering
			\includegraphics[width=1.05\linewidth]{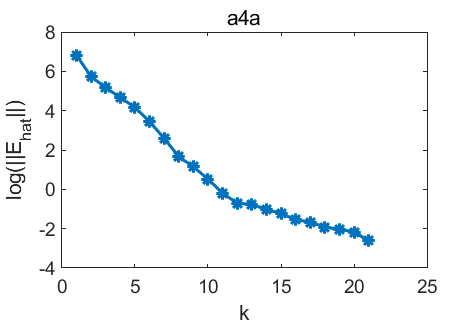}
		\end{minipage}
		\begin{minipage}{0.49\linewidth}
			\centering
			\includegraphics[width=1.05\linewidth]{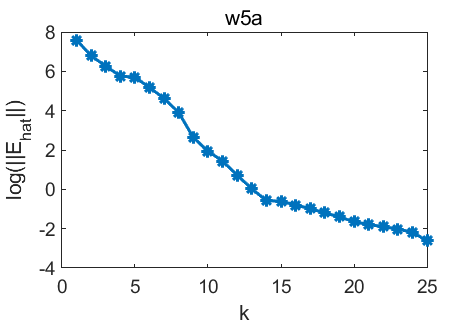}
		\end{minipage}
		\caption{$log\|\hat{\mathcal{E}}\|$ along iterations by imSN} 
		\label{Fig. Vio} 
	\end{figure}
	\vskip 2cm
	Next, we report some further information of the solutions computed by two SN methods in Table \ref{tab_FurtherInfo}.
	(i) The number of iterations in outer loops (SN) and inner loops (BiCG).
	(ii) $z(\mu^*,v^*)$ given in (\ref{eq_z}).
	(iii) The violation of the nonlinear system in terms of $\|\hat{\mathcal{E}}\|_2,\|\epsilon\|_2,$ and $\|\mathcal{E}\|_2$.
	
	One can observe from Table \ref{tab_FurtherInfo} that for most data set, the outer loop reaches the termination condition within 50 iterations. In Fig. \ref{Fig. Vio}, one can see that $\log\|\hat{\mathcal{E}}\|$ decreases almost linearly along $k$ and stopped successfully within small iterations, indicating the superlinear convergence rate of SN. Note that all the returned solution $C^*$s by SN methods are always positive. In terms of $z(\mu^*,w^*)$, it can be seen that $z(\mu^*,v^*)$ is always positive, which satisfies Assumption \ref{Assumption1} and the condition in Theorem \ref{thm_cov1} and Theorem \ref{thm_cov2}, implying that SN provides strict local minimizers of (\ref{eq_NLP}) and the superlinear convergence rate is achieved. Finally, from the last three columns in Table \ref{tab_FurtherInfo}, one can see that the nonlinear system is successfully solved since the returned violation of he system (\ref{eq_E_hat}) is always smaller than 0.1.

	\vskip 0.5cm
	\begin{center}
		\begin{longtable}{ c l l c c c c c }
			\caption{Further information of SN} \label{tab_FurtherInfo} \\
			\specialrule{0.1em}{0pt}{0pt}
			\multicolumn{1}{c}{No.} & 
			\multicolumn{1}{l}{Data set} & 
			\multicolumn{1}{l}{Method} &
			\multicolumn{1}{c}{($k,iter$)} &
			\multicolumn{1}{c}{$z(\mu^*,v^*)$} &
			\multicolumn{1}{c}{$\|\hat{\mathcal{E}}\|_2$} & 
			\multicolumn{1}{c}{$\|\epsilon\|_2$} &
			\multicolumn{1}{c}{$\|\mathcal{E}\|_2$}\\ \specialrule{0.1em}{0pt}{0pt}
			\endfirsthead
			
			\multicolumn{8}{c}%
			{{\bfseries \tablename\ \thetable{} -- continued from previous page}} \\
			\specialrule{0.1em}{0pt}{0pt}
			\multicolumn{1}{c}{No.} & 
			\multicolumn{1}{l}{Data set} & 
			\multicolumn{1}{l}{Method} &
			\multicolumn{1}{c}{($k,iter$)} &
			\multicolumn{1}{c}{$z(\mu^*,v^*)$} &
			\multicolumn{1}{c}{$\|\hat{\mathcal{E}}\|_2$} & 
			\multicolumn{1}{c}{$\|\epsilon\|_2$} &
			\multicolumn{1}{c}{$\|\mathcal{E}\|_2$}\\ \specialrule{0.1em}{0pt}{0pt}
			\endhead
			
			\hline \multicolumn{8}{r}{{Continued on next page}} \\
			\endfoot
			
			\hline \hline
			\endlastfoot			
			
			1	&	fourclass	&	imSN	&	$\left(	24	,	119	\right)$	&	0.098 	&	0.060 	&	0.023 	&	0.056 	\\	
			&		&	exSN	&	$\left(	24	,	119	\right)$	&	0.098 	&	0.060 	&	0.023 	&	0.056 	\\	\hline
			2	&	diabetes	&	imSN	&	$\left(	14	,	153	\right)$	&	0.082 	&	0.096 	&	0.041 	&	0.087 	\\	
			&		&	exSN	&	$\left(	14	,	153	\right)$	&	0.082 	&	0.096 	&	0.041 	&	0.087 	\\	\hline
			3	&	breast-cancer	&	imSN	&	$\left(	17	,	176	\right)$	&	0.092 	&	0.084 	&	0.039 	&	0.074 	\\	
			&		&	exSN	&	$\left(	17	,	176	\right)$	&	0.092 	&	0.084 	&	0.039 	&	0.074 	\\	\hline
			4	&	heart	&	imSN	&	$\left(	13	,	108	\right)$	&	0.187 	&	0.098 	&	0.053 	&	0.082 	\\	
			&		&	exSN	&	$\left(	13	,	108	\right)$	&	0.187 	&	0.098 	&	0.053 	&	0.082 	\\	\hline
			5	&	australian	&	imSN	&	$\left(	93	,	1927	\right)$	&	0.163 	&	0.090 	&	0.036 	&	0.083 	\\	
			&		&	exSN	&	$\left(	34	,	552	\right)$	&	0.084 	&	0.088 	&	0.030 	&	0.082 	\\	\hline
			6	&	svmguide4	&	imSN	&	$\left(	9	,	30	\right)$	&	0.103 	&	0.078 	&	0.061 	&	0.049 	\\	
			&		&	exSN	&	$\left(	9	,	30	\right)$	&	0.103 	&	0.078 	&	0.061 	&	0.049 	\\	\hline
			7	&	german.number	&	imSN	&	$\left(	13	,	180	\right)$	&	0.303 	&	0.095 	&	0.051 	&	0.080 	\\	
			&		&	exSN	&	$\left(	14	,	206	\right)$	&	0.209 	&	0.078 	&	0.040 	&	0.067 	\\	\hline
			8	&	ionosphere	&	imSN	&	$\left(	12	,	150	\right)$	&	3.160 	&	0.071 	&	0.043 	&	0.057 	\\	
			&		&	exSN	&	$\left(	12	,	150	\right)$	&	3.160 	&	0.071 	&	0.043 	&	0.057 	\\	\hline
			9	&	sonar	&	imSN	&	$\left(	12	,	202	\right)$	&	0.300 	&	0.091 	&	0.056 	&	0.072 	\\	
			&		&	exSN	&	$\left(	12	,	204	\right)$	&	0.300 	&	0.091 	&	0.056 	&	0.072 	\\	\hline
			10	&	phishing	&	imSN	&	$\left(	23	,	300	\right)$	&	0.067 	&	0.095 	&	0.031 	&	0.090 	\\	
			&		&	exSN	&	$\left(	23	,	300	\right)$	&	0.067 	&	0.095 	&	0.031 	&	0.090 	\\	\hline
			11	&	a2a	&	imSN	&	$\left(	21	,	472	\right)$	&	0.304 	&	0.072 	&	0.032 	&	0.065 	\\	
			&		&	exSN	&	$\left(	20	,	421	\right)$	&	0.202 	&	0.070 	&	0.031 	&	0.063 	\\	\hline
			12	&	a3a	&	imSN	&	$\left(	32	,	612	\right)$	&	0.131 	&	0.069 	&	0.028 	&	0.063 	\\	
			&		&	exSN	&	$\left(	21	,	423	\right)$	&	0.223 	&	0.078 	&	0.031 	&	0.072 	\\	\hline
			13	&	a4a	&	imSN	&	$\left(	20	,	405	\right)$	&	0.148 	&	0.073 	&	0.030 	&	0.067 	\\	
			&		&	exSN	&	$\left(	21	,	445	\right)$	&	0.144 	&	0.089 	&	0.026 	&	0.085 	\\	\hline
			14	&	a6a	&	imSN	&	$\left(	27	,	658	\right)$	&	0.248 	&	0.079 	&	0.030 	&	0.073 	\\	
			&		&	exSN	&	$\left(	19	,	436	\right)$	&	0.136 	&	0.091 	&	0.043 	&	0.080 	\\	\hline
			15	&	a7a	&	imSN	&	$\left(	50	,	1186	\right)$	&	0.119 	&	0.067 	&	0.025 	&	0.062 	\\	
			&		&	exSN	&	$\left(	21	,	511	\right)$	&	0.174 	&	0.087 	&	0.039 	&	0.078 	\\	\hline
			16	&	a9a	&	imSN	&	$\left(	23	,	504	\right)$	&	0.274 	&	0.081 	&	0.034 	&	0.074 	\\	
			&		&	exSN	&	$\left(	45	,	1052	\right)$	&	0.250 	&	0.065 	&	0.025 	&	0.060 	\\	\hline
			17	&	w1a	&	imSN	&	$\left(	23	,	196	\right)$	&	0.072 	&	0.078 	&	0.030 	&	0.072 	\\	
			&		&	exSN	&	$\left(	23	,	196	\right)$	&	0.072 	&	0.078 	&	0.030 	&	0.072 	\\	\hline
			18	&	w2a	&	imSN	&	$\left(	28	,	303	\right)$	&	0.106 	&	0.098 	&	0.031 	&	0.093 	\\	
			&		&	exSN	&	$\left(	28	,	303	\right)$	&	0.106 	&	0.098 	&	0.031 	&	0.093 	\\	\hline
			19	&	w4a	&	imSN	&	$\left(	82	,	897	\right)$	&	0.097 	&	0.076 	&	0.031 	&	0.070 	\\	
			&		&	exSN	&	$\left(	51	,	575	\right)$	&	0.106 	&	0.087 	&	0.037 	&	0.078 	\\	\hline
			20	&	w5a	&	imSN	&	$\left(	24	,	265	\right)$	&	0.101 	&	0.073 	&	0.030 	&	0.066 	\\	
			&		&	exSN	&	$\left(	24	,	265	\right)$	&	0.101 	&	0.073 	&	0.030 	&	0.066 	\\	\hline	
		\end{longtable}
	\end{center}
	
	\section{Conclusion}\label{sec6}
	In this paper, we proposed a bilevel optimization model for the hyperparameter selection for SVC in which both the upper-level problem and the lower-level problem are based on the logistic loss. 
	We reformulated the bilevel optimization problem into a single-level NLP based on the KKT condition of lower-level problem. Such nonlinear programming contains a set of nonlinear equality constraints and a simple lower bound constraints.
	To solve such NLP, we applied the smoothing Newton method proposed in \cite{Liang} to solve the KKT system, which contains one pair of complementarity constraints. It is proven that the algorithm has a superlinear convergence rate.
	Extensive numerical results on the data sets from the LIBSVM library verified the efficiency of the proposed approach (SN) over almost all the data sets used in this paper, which can achieve competitive results while consuming less time than other methods, and strict local minimizers can be achieved both numerically and theoretically. 
	The proposed approach has the potential to deal with other hyperparameter classification problems in SVM, which may involve multiple hyperparameters or multiple classes. These topics will be investigated further in the near future.

	\vskip 1cm
	Disclosure statement:
	There are no relevant financial or non-financial competing interests to report.

\end{document}